\newcommand{\Strut}{\rule{0pt}{13pt}}   %% extra height in table entry
\newcommand{\genr}[1]{\langle#1\rangle} %% generated group
\newcommand{\defi}[1]{\textsf{#1}} % for defined terms
\newcommand{\Aff}{{\mathbb A}}
\newcommand{\G}{{\mathbb G}}
\newcommand{\PP}{{\mathbb P}}
\newcommand{\Q}{{\mathbb Q}}
\newcommand{\Z}{{\mathbb Z}}
\newcommand{\Qbar}{{\overline{\Q}}}
\def\bbar#1{\setbox0=\hbox{$#1$}\dimen0=.2\ht0 \kern\dimen0 
\overline{\kern-\dimen0 #1}}
\newcommand{\kbar}{{\bbar{k}}}
\newcommand{\Xbar}{X_{\kbar}}
\newcommand{\pp}{{\mathfrak p}}
\newcommand{\s}{{\mathfrak s}}
\newcommand{\N}{{\bbar{N}_{L/k}}}
\newcommand{\sF}{{\mathscr F}}
\newcommand{\calQ}{{\mathcal Q}}
\newcommand{\calX}{{\mathcal X}}
\newcommand{\OO}{{\mathscr O}}
\newcommand{\sA}{{\mathscr A}}
\DeclareMathOperator{\Char}{char}
\DeclareMathOperator{\inv}{inv}
\DeclareMathOperator{\im}{im}
\DeclareMathOperator{\Aut}{Aut}
\DeclareMathOperator{\Gal}{Gal}
\DeclareMathOperator{\Br}{Br}
\DeclareMathOperator{\Div}{Div}
\DeclareMathOperator{\Pic}{Pic}
\DeclareMathOperator{\Spec}{Spec}
\DeclareMathOperator{\Proj}{Proj}
\DeclareMathOperator{\cyc}{cyc}
\newtheorem{theorem}{Theorem}[section]
\newtheorem{lemma}[theorem]{Lemma}
\newtheorem{proposition}[theorem]{Proposition}
\theoremstyle{definition}
\newtheorem{definition}[theorem]{Definition}
\theoremstyle{remark}
\newtheorem{remark}[theorem]{Remark}
\begin{document}

\title[short paper title]{Weak approximation on del Pezzo surfaces of degree 1}
\subjclass[2000]{Primary 14 G05; Secondary 12 G05}
%\keywords{9999}
\author{Anthony V\'arilly-Alvarado}
\thanks{This research was partially supported by a Marie Curie Research Training Network within the 6th European Community Framework Program.}
\address{Department of Mathematics, University of California, 
	Berkeley, CA 94720-3840, USA}
\email{varilly@math.berkeley.edu}
\urladdr{http://math.berkeley.edu/\~{}varilly}
\date{January 8th, 2009}

\begin{abstract}
We study del Pezzo surfaces of degree $1$ of the form 
\[ 
w^2 = z^3 + Ax^6 + By^6 
\] 
in the weighted projective space $\PP_k(1,1,2,3)$, where $k$ is a perfect field of characteristic not $2$ or $3$ and $A,B \in k^*$. Over a number field, we exhibit an infinite family of (minimal) counterexamples to weak approximation amongst these surfaces, via a Brauer-Manin obstruction.
\end{abstract}

\maketitle

%****************************************************************************
\section{Introduction}\label{S:introduction}

Let $X$ be a geometrically integral variety over a number field $k$. Write $\Omega_k$ for the set of places of $k$, and let $k_v$ be the completion of $k$ at $v \in \Omega_k$. Assume that $X$ has $k_v$-points at every place $v$. We say that $X$ satisfies \defi{weak approximation} if the diagonal embedding
\[
X(k) \hookrightarrow \prod_{v\in \Omega_k} X(k_v)
\]
is dense for the product of the $v$-adic topologies. If $X'$ is another $k$-variety, $k$-birational to $X$, and both $X$ and $X'$ are smooth, then $X'$ satisfies weak approximation if and only if $X$ does. As Swinnerton-Dyer puts it, the ``dramatic'' failure of weak approximation, that is, when $X(k) = \emptyset$ and yet $X(k_v)\neq \emptyset$ for every place $v$, is referred to as the failure of the \defi{Hasse principle}; see~\cite{Harari}.

A \defi{del Pezzo surface} $X$ is a smooth projective geometrically rational surface with ample anticanonical class $-K_X$. The \defi{degree} $d$ of $X$ is $K_X^2$; it is an integer in the range $1\leq d \leq 9$. When $d \geq 5$, $X$ is known to satisfy both the Hasse principle and weak approximation.  On the other hand, there are counterexamples to both of these phenomena when $d$ is $2$, $3$ and $4$, all of which can be explained by a Brauer-Manin obstruction; see~\cite{KreschTschinkel},~\cite{CasselsGuy} and~\cite{BirchSwinnertonDyer}, respectively.  Del Pezzo surfaces of degree $1$ satisfy the Hasse principle because they come furnished with a rational point: the base-point of the anticanonical linear system.  (We refer to this point as the \defi{anticanonical point}.)  We will give examples of these surfaces that do not satisfy weak approximation, following ideas of Kresch and Tschinkel~\cite{KreschTschinkel}.

Let $k$ be a perfect field and let $k[x,y,z,w]$ be the weighted graded ring where the variables $x,y,z,w$ have weights $1,1,2,3$, respectively. Set $\PP_k(1,1,2,3) := \Proj k[x,y,z,w]$. Every del Pezzo surface of degree $1$ over $k$ is isomorphic to a smooth sextic hypersurface in $\PP_k(1,1,2,3)$. Conversely, any smooth sextic in $\PP_k(1,1,2,3)$ is a del Pezzo surface of degree $1$ over $k$ (see \S\ref{S:anticanonical models}). Our main result is as follows.

\begin{theorem}
\label{T:maintheorem}
Let $p\geq 5$ be a rational prime number such that $p \not\equiv 1 \bmod 12$. Let $X$ be the del Pezzo surface of degree $1$ over $\Q$ given by
\[
w^2 = z^3 + p^3x^6 + p^3y^6
\] 
in $\PP_\Q(1,1,2,3)$.  Then $X$ is $\Q$-minimal and there is a Brauer-Manin obstruction to weak approximation on $X$. Moreover, the obstruction arises from a cyclic algebra class in $\Br X/\Br \Q$.
\end{theorem}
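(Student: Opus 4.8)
The plan is to treat the statement in four stages: reduce to a Galois-module computation, establish $\Q$-minimality, identify a generator of $\Br X/\Br\Q$ as an explicit cyclic algebra, and carry out the local invariant computations that produce the obstruction. First, since $p\ne0$ and $\Char\Q=0$, the Jacobian criterion shows $w^2=z^3+p^3x^6+p^3y^6$ is a smooth sextic missing the singular locus of $\PP_\Q(1,1,2,3)$, hence by \S\ref{S:anticanonical models} a del Pezzo surface of degree $1$ over $\Q$; in particular $\Pic\Xbar$ is free of rank $9$, $\KXperp$ is the $E_8$ root lattice, and $G_\Q$ permutes the $240$ exceptional classes. Note also that $X$ carries the rational anticanonical point, so $X(\Q_v)\ne\emptyset$ for every $v$ and the question is genuinely one of weak approximation. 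I would pin down the $G_\Q$-action on $\Pic\Xbar$ by writing the exceptional curves down explicitly: viewing $X$ through its genus-one (anticanonical) fibration over $\PP^1$, whose generic fibre is $y^2=x^3+p^3(t^6+1)$, the exceptional curves correspond to the $240$ minimal sections of the associated rational elliptic surface, and for this $j=0$ surface these are defined over the field $L=\Q(\zeta_{12},\sqrt p)$; one then reads off the action of $\Gal(L/\Q)\cong(\Z/2\Z)^3$.

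For $\Q$-minimality I would use that a del Pezzo surface of degree $1$ over $k$ is $k$-minimal as soon as $(\Pic\Xbar)^{G_k}$ has rank $1$, equivalently $G_k$ fixes no nonzero vector of $\KXperp\otimes\Q$; this I would check directly from the $\Gal(L/\Q)$-action found above, e.g.\ by exhibiting an element of the image of $G_\Q$ in $W(E_8)$ whose characteristic polynomial on the $E_8$-lattice does not have $1$ as a root. For the Brauer group: $X$ is geometrically rational, so $\Br\Xbar=0$ and (using the rational point) $\Br X/\Br\Q\cong\HH^1(G_\Q,\Pic\Xbar)\cong\HH^1(\Gal(L/\Q),\Pic X_L)$; I would compute this finite group from the module structure and verify it is nonzero — I expect $\Z/2\Z$.

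To produce an honest representative I would follow the recipe of Kresch and Tschinkel. Starting from a $1$-cocycle representing a generator of $\HH^1(G_\Q,\Pic\Xbar)$, lift its values to divisors on $\Xbar$ and feed this through the exact sequence $0\to\Qbar(X)^*/\Qbar^*\to\Div\Xbar\to\Pic\Xbar\to0$ to obtain a class in $\Br\Q(X)$; restricting the cocycle to a suitable cyclic subextension $L'/\Q$ of $L$ presents this class concretely as a cyclic algebra $\calA=(L'/\Q,\sigma,g)$ with $g\in\Q(X)^*$. The point to verify here is that the divisor of $g$ on $X$ is a norm from $L'(X)/\Q(X)$: this is exactly the condition that makes all residues of $\calA$ along the codimension-one points of $X$ vanish, so that $\calA$ lies in $\Br X$ rather than merely in $\Br\Q(X)$, and one then checks $\calA$ maps to the nonzero class in $\Br X/\Br\Q$.

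Finally the local invariants, which is the heart of the matter. Spreading $\calA$ out, it extends to an Azumaya algebra over a smooth model of $X$ over $\Z[1/N]$ with $N$ a product of $2$, $3$ and $p$, so $\inv_v\calA(P_v)=0$ for every $P_v\in X(\Q_v)$ and every $v\nmid N\infty$; at $v\in\{2,3,\infty\}$ a direct computation with the explicit $g$ shows $\inv_v\calA(\,\cdot\,)$ is constant on $X(\Q_v)$. The crux is to exhibit $P_1,P_2\in X(\Q_p)$ with $\inv_p\calA(P_1)\ne\inv_p\calA(P_2)$: here the hypotheses $p\ge5$ and $p\not\equiv1\bmod12$ are used to guarantee both that $L'$ is non-split at $p$ (so the symbol can be nonzero) and that $X(\Q_p)$ contains points at which $g$ takes values in distinct classes, which one produces by examining $\Q_p$-points lying near, respectively away from, the degenerate fibres $x^6+y^6=0$. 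Granting this, $(P_v)\mapsto\sum_v\inv_v\calA(P_v)$ is a continuous, finitely-valued, non-constant function on $\prod_v X(\Q_v)$ which vanishes on the diagonal image of $X(\Q)$ by global class field theory; hence $\overline{X(\Q)}\subsetneq\prod_v X(\Q_v)$, so weak approximation fails, the obstruction coming from the cyclic class $\calA$. I expect the $p$-adic analysis — simultaneously identifying $\inv_p\calA$ as a non-constant function on $X(\Q_p)$ and keeping the invariants at $2$ and $3$ constant — to be the main obstacle.
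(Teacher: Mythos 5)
Your overall architecture --- compute the Galois action on $\Pic\Xbar$, identify $H^1$, realize a generator as a cyclic algebra whose divisor is a norm, then compare local invariants at two adelic points --- is the same as the paper's. But two of your concrete claims are wrong in ways that would derail the execution.

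First, the splitting field is not $\Q(\zeta_{12},\sqrt p)$ and the Galois group is not $(\Z/2\Z)^3$. By Proposition~\ref{P:Fieldofdefn} the exceptional curves of $w^2=z^3+p^3x^6+p^3y^6$ are defined over $K=\Q(\zeta,\sqrt[3]{2},\sqrt p)$ and over no proper subfield: for instance $V\bigl(z-\sqrt[3]{2}\,p\,xy,\;w-p\sqrt p\,(x^3+y^3)\bigr)$ is an exceptional curve, and the proof of that proposition recovers $\sqrt[3]{2}$ and $\zeta$ (a \emph{sixth}, not twelfth, root of unity) from ratios of coefficients of such curves. So $\Gal(K/\Q)\cong S_3\times\Z/2\Z$ is nonabelian of order $12$, and the correct group is $\Br X/\Br\Q\cong(\Z/2\Z)^2$, not $\Z/2\Z$. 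Everything downstream of the module computation (the cocycle, the cyclic subextension, the function $g$) must then be redone; the paper takes the index-two normal subgroup $H=\langle\sigma,\tau\rangle$, sets $L=K^H=\Q(\sqrt p)$, descends the class of $\Gamma_6-\Gamma_8$ to $L$, and only then applies Theorem~\ref{T:cyclicalgebras} to get a quaternion algebra $(\Q(\sqrt p)/\Q,f)$. (Your minimality argument is fine in principle: rank-one $(\Pic\Xbar)^{G_\Q}$ does imply $\Q$-minimality, and this is exactly how the paper concludes.)

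Second, your local analysis points at the wrong places. You assert that the invariant is constant on $X(\Q_v)$ for $v\in\{2,3,\infty\}$ and that the obstruction must be hunted down on $X(\Q_p)$, with the hypothesis $p\not\equiv1\bmod{12}$ serving to make $L'$ nonsplit at $p$. But $\Q(\sqrt p)$ is ramified at $p$ for \emph{every} $p$, so the congruence cannot be playing that role. In the actual proof the rational points $P_1=[1:0:-p:0]$ and $P_2=[0:1:-p:0]$ specialize the algebra to the global quaternion algebras $(p,3)$ and $(p,1)$, whose local invariants differ at $3$ when $p\equiv5\bmod6$ and at $2$ when $p\equiv3\bmod4$; that is, the non-constancy is detected at $2$ or $3$ (and for $p\equiv11\bmod{12}$ the two algebras actually agree at $p$). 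The hypothesis $p\not\equiv1\bmod{12}$ is precisely what guarantees $(p,3)$ is ramified somewhere, so that the adelic point obtained by switching from $P_1$ to $P_2$ at a single ramified place has total invariant $\tfrac12$. Without correcting both the Galois-theoretic input and the placement of the local computation, the proposed proof does not go through.
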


To obtain these examples, we begin with an explicit study of the geometry of \emph{diagonal} del Pezzo surfaces of degree $1$ over a perfect field $k$ with $\Char k \neq 2, 3$. These are sextic surfaces of the form
\begin{equation}
w^2 = z^3 + Ax^6 + By^6
\label{E:diagonalsurfaces}
\end{equation}
in the weighted projective space $\PP_k(1,1,2,3)$, where $A, B \in k^*$. The conditions $A$, $B \in k^*$ and $\Char k \neq 2, 3$, taken together, are equivalent to the smoothness of these surfaces.

Let $R$ be a graded ring and let $I \subseteq R$ be a homogeneous ideal. Then $V(I) := \Proj R/I$. If $I = (f_1,\cdots f_n)$ we write $V(f_1,\dots,f_n)$ instead of $V((f_1,\dots,f_n))$. We start by finding an explicit description of generators for the geometric Picard group for the surfaces~(\ref{E:diagonalsurfaces}).  More generally, we find explicit equations for all $240$ exceptional curves on a del Pezzo surface of degree $1$ over any perfect field. 
%Our description of the exceptional curves on a del Pezzo surface of degree $1$ is contained in the following theorem.

\begin{theorem}  \label{T:lines}
Let $X$ be a del Pezzo surface of degree 1 over a perfect field $k$, given as a smooth sextic hypersurface $V(f(x,y,z,w))$ in $\PP_k(1,1,2,3)$. Let 
\[
\Gamma = V(z - Q(x,y), w - C(x,y)) \subseteq \PP_{\kbar}(1,1,2,3),
\]
where $Q(x,y)$ and $C(x,y)$ are homogenous forms of degrees $2$ and $3$, respectively, in $\kbar[x,y]$. If $\Gamma$ is a divisor on $X_\kbar := X \times_k \kbar$, then it is an exceptional curve of $X$. Conversely, every exceptional curve on $X$ is a divisor of this form.
\end{theorem}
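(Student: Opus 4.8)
The plan is to work on the minimal free resolution side: a smooth sextic $X = V(f)$ in $\PP_k(1,1,2,3)$ is a del Pezzo surface of degree $1$, and the $240$ exceptional curves are precisely the irreducible curves $C$ with $C^2 = -1$ and $C \cdot (-K_X) = 1$. Since $-K_X = \calO_X(1)$ in this weighted setting, an exceptional curve meets the anticanonical class once, so it is cut out (set-theoretically, and in fact scheme-theoretically since it is reduced of degree $1$ against $-K_X$) by the vanishing of the two ``missing'' coordinates in the appropriate sense. More precisely, I would first show that any exceptional curve $C$ on $X_\kbar$ has the property that the restriction maps $\HH^0(\PP(1,1,2,3), \calO(2)) \to \HH^0(C, \calO_C(2))$ and $\HH^0(\PP(1,1,2,3), \calO(3)) \to \HH^0(C, \calO_C(3))$ force $z$ and $w$ to be expressible, along $C$, as a quadratic form $Q(x,y)$ and a cubic form $C(x,y)$ in $x,y$; this is because $C \cong \PP^1_\kbar$ (a $(-1)$-curve is rational), and $\calO_C(1) = \calO_{\PP^1}(1)$ since $C \cdot (-K_X) = 1$, so $\HH^0(C,\calO_C(2))$ and $\HH^0(C,\calO_C(3))$ are spanned by the images of the degree-$2$ and degree-$3$ monomials in $x,y$ alone. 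Hence $C \subseteq V(z - Q(x,y),\, w - C(x,y))$ for suitable $Q, C$, and a dimension/degree count shows this containment is an equality.

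Conversely, suppose $\Gamma = V(z - Q(x,y), w - C(x,y))$ is a divisor on $X_\kbar$. Then $\Gamma \cong \PP^1_\kbar$ via $(x:y) \mapsto (x:y:Q(x,y):C(x,y))$, and one computes $\Gamma \cdot (-K_X) = \Gamma \cdot \calO_X(1) = 1$ directly: the hyperplane section $\calO_X(1)$ pulls back to $\calO_{\PP^1}(1)$ under this parametrization since $x,y$ have weight $1$. By the adjunction formula on the del Pezzo surface $X_\kbar$, $2g(\Gamma) - 2 = \Gamma^2 + \Gamma \cdot K_X = \Gamma^2 - 1$, and since $\Gamma \cong \PP^1$ has genus $0$ we get $\Gamma^2 = -1$. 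An irreducible curve with $\Gamma^2 = -1$ and $\Gamma \cdot (-K_X) = 1$ on a del Pezzo surface of degree $1$ is an exceptional curve, so $\Gamma$ is one of the $240$ lines.

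For the ``moreover'' clause --- that \emph{every} exceptional curve has this shape --- the key point already established is that a $(-1)$-curve $C$ maps isomorphically to $\PP^1$ with $\calO_C(1) \cong \calO_{\PP^1}(1)$. Writing the coordinate functions $x,y,z,w$ restricted to $C$ as elements of $\bigoplus_n \HH^0(C, \calO_C(n))$, the weight-$1$ generators $x,y$ restrict to a basis of $\HH^0(C,\calO_C(1))$ (after possibly a linear change we may assume $C$ is not contained in $V(x,y)$, which is a single point and hence cannot contain a curve), and then $z|_C \in \HH^0(C,\calO_C(2))$ and $w|_C \in \HH^0(C,\calO_C(3))$ are automatically polynomial in $x|_C, y|_C$. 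Lifting these expressions gives the forms $Q$ and $C$, and $C \subseteq V(z-Q, w-C)$; equality follows because both are integral curves with the same class $-K_X$ restricted, so the same Hilbert polynomial with respect to $\calO(1)$.

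The main obstacle I anticipate is handling the \emph{weighted} projective setting carefully: one must be sure that $\calO_X(1) = -K_X$ is genuinely a line bundle and that $\HH^0(X,\calO_X(n))$ is computed by degree-$n$ monomials in $x,y,z,w$ modulo $f$, which requires knowing $X$ avoids the singular locus of $\PP(1,1,2,3)$ (true by smoothness of $X$) and a vanishing statement $\HH^1(\PP(1,1,2,3), \calO(n-6)) = 0$ in the relevant range so that restriction $\HH^0(\PP, \calO(n)) \to \HH^0(X, \calO(n))$ is surjective. Granting the standard cohomology of weighted projective spaces (or equivalently working on a resolution), the argument above is essentially a bookkeeping exercise; the geometry --- rationality of $(-1)$-curves plus adjunction --- does all the real work.
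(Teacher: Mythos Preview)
Your proposal is correct and takes a genuinely different route from the paper's proof.

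For the forward direction, the paper computes $(\Gamma,-K_X)=1$ as you do, but then obtains $(\Gamma,\Gamma)=-1$ by introducing the Bertini involution: it shows $\Gamma+\Gamma'=\pi_2^*(D)$ for $D=V(z-Q(x,y))\subseteq\PP(1,1,2)$, observes $(\pi_2^*(D))^2=4$, and uses a direct computation $(\Gamma,\Gamma')=3$ (Lemma~4.1) to conclude. Your adjunction argument is cleaner: since $\Gamma\cong\PP^1$ via $[x:y]\mapsto[x:y:Q:C]$, one gets $p_a(\Gamma)=0$ and hence $\Gamma^2=-1$ immediately from $2p_a-2=\Gamma^2+\Gamma\cdot K_X$.

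For the converse, the paper invokes the bianticanonical map $\phi_2\colon X\to\calQ\subseteq\PP^3$ and a lemma of Cragnolini--Oliverio asserting that $\phi_2$ maps an exceptional curve isomorphically onto a hyperplane section of $\calQ$ missing the vertex; this forces $z=Q(x,y)$ on $\Gamma$, and then a factorization of $f(1,y,Q(1,y),w)$ produces the cubic $C(x,y)$. Your argument bypasses this external lemma: once $C\cong\PP^1$ with $\calO_C(1)\cong\calO_{\PP^1}(1)$, the restrictions $z|_C$ and $w|_C$ lie in $H^0(\PP^1,\calO(2))$ and $H^0(\PP^1,\calO(3))$, which are spanned by monomials in $x|_C,y|_C$.

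One small point to tighten: you justify that $x|_C,y|_C$ span $H^0(C,\calO_C(1))$ by noting $C\not\subseteq V(x,y)$, but what you actually need is that $C\not\subseteq V(\alpha x+\beta y)$ for every $(\alpha,\beta)\neq(0,0)$. This follows from ampleness of $-K_X$: if $C\subseteq D$ with $D\in|{-K_X}|$, then $D-C$ is effective with $(-K_X)\cdot(D-C)=0$, forcing $D=C$; but then $C^2=1\neq-1$.

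What each approach buys: yours is self-contained and works uniformly in all characteristics without invoking the bianticanonical geometry or the Bertini involution. The paper's approach, while less direct, establishes along the way the relation $\Gamma+\Gamma'=\pi_2^*(D)$ and the intersection $(\Gamma,\Gamma')=3$, both of which feed into later computations (e.g.\ the explicit Galois action and the construction of cyclic algebras in \S\ref{S:counterexamples}).
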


With explicit generators for $\Pic\Xbar$ for a surface $X$ of the form~(\ref{E:diagonalsurfaces}), we may compute the cohomology group $H^1(\Gal(\kbar/k),\Pic X_\kbar)$.  We derive the following theorem, analogous to~\cite[Thm.\ 1]{KreschTschinkel}.

\begin{theorem}
\label{T:possibleH1s}
Let $k$ be a perfect field with $\Char k \neq 2,3$. Let $X$ be a minimal del Pezzo surface of degree $1$ over $k$ of the form~\emph{(\ref{E:diagonalsurfaces})}. Then $H^1(\Gal(\kbar/k),\Pic(X_\kbar))$ is isomorphic to one of the following fourteen groups:
\begin{gather*}
\{1\}; \quad (\Z/2\Z)^i,\ \ i \in \{1,2,3,4,6,8\};  \quad (\Z/3\Z)^j,\ \ j \in \{1,2,3,4\}; \\
(\Z/6\Z)^k\ \ k\in \{1,2\}; \quad \Z/2\Z\times \Z/6\Z.
\end{gather*}
Each group occurs for some field $k$. When $k = \Q$ only the following seven groups occur:
\begin{gather*}
\{1\},\quad \Z/2\Z ,\quad \Z/2\Z\times \Z/2\Z, \quad \Z/2\Z\times \Z/2\Z\times \Z/2\Z, \\
\quad \Z/3\Z, \quad \Z/3\Z\times \Z/3\Z,  \quad \Z/6\Z.
\end{gather*}
\end{theorem}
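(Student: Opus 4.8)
The plan is to reduce everything to a Galois-module computation for $\Pic \Xbar$, which by Theorem~\ref{T:lines} we can make completely explicit. First I would fix a diagonal surface $X$ of the form~(\ref{E:diagonalsurfaces}) and use the description of the $240$ exceptional curves as loci $V(z-Q(x,y),w-C(x,y))$ to read off a $\Z$-basis of $\Pic \Xbar$ consisting of (classes of) such curves together with $-K_X$; since $\Pic \Xbar \cong \Z^8$ with intersection form the $E_8$ lattice (extended by $K_X$), knowing the $240$ curves explicitly pins down the incidence data. The key point is that for a diagonal surface the forms $Q$ and $C$ giving exceptional curves have coefficients in $\kbar$ that are built out of roots of $A$, $B$ and roots of unity (cube roots, sixth roots), so the absolute Galois group $\Gal(\kbar/k)$ acts on the set of $240$ curves through a finite quotient, namely the Galois group of an explicit splitting field $L/k$ obtained by adjoining $\sqrt{-3}$, a cube root of $A/B$, sixth roots of $A$ and $B$, etc. I would identify this quotient as a subgroup $G$ of the Weyl group $W(E_8)$, acting on the $240$ roots compatibly with the incidence relations.

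Next I would compute $H^1(G, \Pic \Xbar)$ for each possible $G$. Concretely, the maximal splitting group is some fixed finite group (a subgroup of $W(E_8)$ generated by the symmetries visible in the diagonal equation — permuting/scaling the three ``coordinates'' $Ax^6$, $By^6$, $z^3$ and acting by roots of unity), and every $X$ of the form~(\ref{E:diagonalsurfaces}) has $\Gal(\kbar/k)$ acting through a subgroup of this. So the finite list of possible cohomology groups is obtained by running over subgroups $G$ of this master group (up to conjugacy, and subject to the minimality hypothesis, which forces $(\Pic \Xbar)^G$ to have rank $1$, i.e. $G$ acts without nonzero invariants on the orthogonal complement $\KXperp$ of $K_X$), and computing $H^1(G,\Pic \Xbar) = H^1(G, \KXperp)$ by hand or by machine for each such $G$. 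This is exactly parallel to~\cite[Thm.\ 1]{KreschTschinkel} for degree $2$, and the same bookkeeping yields the fourteen groups listed. The realizability claim — that each of the fourteen groups actually occurs — is handled by exhibiting, for each candidate $G$, an explicit field $k$ and values $A,B$ realizing it; here fields like $\Q$, quadratic or cubic extensions, and suitable function fields $\F_q(t)$ or $\R$ will be enough to get all fourteen, while over $\Q$ the constraint that $\Q$ admits cyclic extensions of degrees $2,3,6$ but the relevant larger subgroups are not realizable as Galois groups acting the required way cuts the list down to seven.

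For the $\Q$-specific statement the argument is: over $\Q$, the splitting field $L$ of the $240$ curves is generated by things like $\sqrt{-3}$ (the primitive cube root of unity), a cube root $\sqrt[3]{A/B}$, and sixth roots $\sqrt[6]{A}$, $\sqrt[6]{B}$ — its Galois group over $\Q$, when the surface is minimal, turns out to be forced into a short list, and one checks directly that only the subgroups giving $\{1\}$, $\Z/2\Z$, $(\Z/2\Z)^2$, $(\Z/2\Z)^3$, $\Z/3\Z$, $(\Z/3\Z)^2$, $\Z/6\Z$ can occur, each again realized by an explicit choice of $A,B \in \Q^*$. The main obstacle I anticipate is not conceptual but organizational: correctly enumerating the conjugacy classes of the relevant subgroups of $W(E_8)$ that (a) fix $K_X$, (b) have rank-$1$ invariant sublattice (minimality), and (c) are \emph{compatible with the special diagonal structure} — the diagonal equation only sees a proper subgroup of $W(E_8)$, so one must first determine that subgroup precisely from the explicit curve equations, and then the cohomology computations themselves, while routine, are numerous and must be done carefully (ideally cross-checked by computer algebra) to be sure neither spurious groups are included nor genuine ones omitted. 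Establishing that the listed groups are exactly the ones that arise, with no duplication and no gaps, is where the real work lies.
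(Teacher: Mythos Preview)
Your overall strategy matches the paper's: enumerate subgroups of a fixed ``master'' Galois group inside $W(E_8)$, discard those not corresponding to minimal surfaces, compute $H^1$ by machine, and exhibit explicit $(A,B,k)$ realizing each group. Two points, however, need correction.

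First, your minimality criterion is wrong. You say minimality forces $(\Pic \Xbar)^G$ to have rank $1$, i.e.\ no nonzero invariants on $\KXperp$. For del Pezzo surfaces of degree $1$ this implication fails: rank-$1$ Picard implies minimal, but not conversely (see~\cite[Rem.\ 28.1.1]{Manin}). The correct test, and the one the paper runs, is to check whether $G$ has a stable set of pairwise skew exceptional curves; if so the surface is not minimal and $G$ is discarded. Using rank-$1$ invariants as your filter would risk throwing out genuinely minimal cases and corrupting the final list.

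Second, your description of the splitting field and of the $\Q$ case is too loose and misses the structural point. The splitting field is precisely $K = k(\zeta,\sqrt[3]{2},\alpha,\beta)$ (Proposition~\ref{P:Fieldofdefn}); the $\sqrt[3]{2}$ is not optional and does not come from $A$ or $B$. Consequently the generic Galois group $G_0$ depends on whether $\sqrt[3]{2}$ and $\zeta$ already lie in $k$, giving four separate cases (Table~\ref{Ta:H1s}). Moreover, not every subgroup of $G_0$ arises from some surface: one must also impose that $G \to G_0/\langle \iota_A,\iota_B\rangle$ is surjective, since $k(\zeta,\sqrt[3]{2}) \subseteq K$ forces this. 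The reason only seven groups occur over $\Q$ is simply that $\sqrt[3]{2},\zeta \notin \Q$ places us in the first of the four cases, whose list is exactly those seven; it is not an abstract realizability constraint on cyclic extensions.
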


If, furthermore, $k$ is a number field, then we may compute the group $\Br X /\Br k$, of arithmetic interest, via the isomorphism
\begin{equation}
\Br X/\Br k \xrightarrow{\sim} H^1(\Gal(\kbar/k),\Pic X_\kbar),
\label{E:HochshildSerreIso}
\end{equation}
obtained from the Hochschild-Serre spectral sequence (see, for example,~\cite[Lemme 15]{Requivalence}).  

To prove a statement like Theorem~\ref{T:maintheorem}, we have to identify elements of $\Br X/\Br k$ explicitly. Given a cohomology class in $H^1(\Gal(\kbar/k),\Pic X_\kbar)$, it can be difficult to identify the corresponding element in $\Br X/\Br k$ guaranteed by the isomorphism~(\ref{E:HochshildSerreIso}). Hence, in \S\ref{S:cyclic} we present a simple strategy to search for cohomology classes in $H^1(\Gal(\kbar/k),\Pic X_\kbar)$ which correspond to cyclic algebras in the image of the natural map 
\[
\Br X/\Br k \to \Br k(X)/\Br k,
\] 
where $X$ is a locally soluble smooth geometrically integral variety over a number field $k$. We hope that Theorem~\ref{T:cyclicalgebras} will be of use to others wishing to calculate Brauer-Manin obstructions to the Hasse principle and weak approximation via cyclic algebras on this wide class of varieties.

The paper is organized as follows. In \S\ref{S:background} we review a few basic facts about del Pezzo surfaces and Brauer-Manin obstructions.  In \S\ref{S:cyclic} we present a strategy to search for cyclic algebras in the image of the natural map $\Br X/\Br k \to \Br k(X)/\Br k$, as explained above. In \S\ref{S:lines} we prove Theorem~\ref{T:lines} and in \S\ref{S:CurvesOnDiagonals} we use it to write down generators for the geometric Picard group on a surface $X$ of the form~(\ref{E:diagonalsurfaces}). In \S\ref{S:galois} we compute the action of $\Gal(\kbar/k)$ on $\Pic\Xbar$ and the possible groups $H^1(\Gal(\kbar/k),\Pic(X_\kbar))$. Finally we prove Theorem~\ref{T:maintheorem} in \S\ref{S:counterexamples}.

%----------------------------------------------------------------
\subsection{Notation}\label{S:notation}

In addition to the notation introduced above, we use the following conventions.
Throughout $k$ denotes a perfect field and $\kbar$ is a fixed algebraic closure of $k$. From \S\ref{S:CurvesOnDiagonals} onwards we assume $\Char k \neq 2, 3$; in this case $A$ and $B$ denote elements of $k^*$; $\alpha$ and $\beta$ are fixed sixth roots of $A$ and $B$, respectively, in $\kbar$. Also, $\zeta$ denotes a primitive sixth root of unity in $\kbar$ and $s$ a fixed cube root of $2$ in $\kbar$.

If $X$ and $Y$ are $S$-schemes then $X_Y := X\times_S Y$. If $Y = \Spec R$ then we write $X_R$ instead of $X_{\Spec R}$. For an integral scheme $X$ over a field we write $k(X)$ for the function field of $X$. A \defi{surface} $X$ is a separated integral scheme of finite type over a field $k$ of dimension $2$. If $X$ is a locally factorial projective surface, then there is an intersection pairing on the Picard group 
$
(\,\cdot\, ,\cdot\, )_X \colon \Pic X \times \Pic X \to \Z.
$
We omit the subscript on the pairing if no confusion can arise. For such an $X$, we will identify $\Pic(X)$ with the Weil divisor class group; in particular, we will use additive notation for the group law on $\Pic(X)$.

For a smooth variety $X$ over a number field $k$, and a Galois extension $L/k$, we write $ N_{L/k}\colon \Div X_L \to \Div X_k$ and $\N\colon \Pic X_L \to \Pic X_k$ for the usual norm maps, respectively.

%----------------------------------------------------------------
\subsection*{Acknowledgements}

I thank my advisor Bjorn Poonen for countless useful conversations. I also thank Andrew Kresch, Patrick Corn, Dan Erman and Mark Haiman for many useful discussions.  I thank the referees for valuable comments and suggestions.  All computations were done using {\tt Magma}~\cite{magma}. The relevant scripts are available from the author.

%****************************************************************************
\section{Background}\label{S:background}

We begin by reviewing some well known facts about del Pezzo surfaces over a field $k$. The basic references on the subject are~\cite{Manin}, ~\cite{Demazure1980} and~\cite[III.3]{Kollar1996}.  Unless otherwise stated, $X$ denotes a del Pezzo surface over a field $k$ of degree $d$ such that $X_{\kbar} \ncong \PP^1_{\kbar}\times \PP^1_{\kbar}$.

%----------------------------------------
\subsection{Picard groups}
\label{S:blowup}
Recall an exceptional curve on $X$ is an irreducible curve $C$ on $\Xbar$ such that $(C,C) = (C,K_X) = -1$. When $d = 1$, $X$ contains $240$ exceptional curves. The group $\Pic X_\kbar$ is isomorphic to $\Z^{10-d}$; it is generated by the classes of exceptional curves. A possible basis for $\Pic \Xbar$ is $\{e_1,\dots,e_{9-d},\ell\}$, where each $e_i$ is the class of an exceptional curve, and
\[
(e_i,e_j) = -\delta_{ij},\quad 
(e_i,\ell) = 0,\quad
(\ell,\ell) = 1.
\]
Under this basis, the anticanonical class is given by $-K_X = 3\ell - \sum e_i$.

%----------------------------------------
\subsection{Anticanonical models}
\label{S:anticanonical models}

If $X$ is a del Pezzo surface then $X \cong \Proj \bigoplus_{m\geq 0} H^0(X,-mK_X)$ \cite[Theorem III.3.5]{Kollar1996}. The latter scheme is known as the \defi{anticanonical model} of $X$. When $d=1$ the anticanonical model is a smooth sextic hypersurface $V(f(x,y,z,w))$ in $\PP_k(1,1,2,3)$. Any smooth sextic hypersurface in $\PP_k(1,1,2,3)$ is a del Pezzo surface of degree $1$. In this case $\{x,y\}$ is a basis for $H^0(X,-K_X)$ and $\{x^2,xy,y^2,z\}$ is a basis for $H^0(X,-2K_X)$.

%----------------------------------------
\subsection{The Bertini involution} \label{SS:Bertini}
Let $X$ be a del Pezzo surface of degree $1$ given as a smooth sextic $V(f)$ in $\PP_k(1,1,2,3)$. Write $f(x,y,z,w) = w^2 - aw - b$, where $a, b \in k[x,y,z]$ have degrees $3$ and $6$, respectively. If $\Char k \neq 2$, then we may (and do) assume that $a \equiv 0$ by making the change of variables $w\mapsto w + a/2$. The map
\[
\PP_k(1,1,2,3) \to \PP_k(1,1,2,3),\quad [x:y:z:w] \mapsto [x:y:z:-w + a]
\]
restricts to an automorphism of $X$ called the \defi{Bertini involution}; see~\cite[p. 68]{Demazure1980}. 

%----------------------------------------
\subsection{Galois action on the Picard Group}
\label{S:GaloisAction}

In this section $X$ is a smooth, projective, geometrically rational surface over a number field $k$.
Let $K$ be the smallest subfield of $\kbar$ over which all exceptional curves of $X$ are defined. We say $K$ is the \defi{splitting field} of $X$. The natural action of $\Gal(\kbar/k)$ on $\Pic X_\kbar \cong \Pic X_K$ factors through the quotient $\Gal(K/k)$, giving a map
\begin{equation}
\phi_X\colon \Gal(K/k) \to \Aut(\Pic X_K).
\label{E:FiniteGaloisRep}
\end{equation}
If we have equations for an exceptional curve $C$ of $X$, then an element $\sigma \in \Gal(K/k)$ acts on $C$ by applying $\sigma$ to each coefficient. The curve ${}^\sigma\! C$ is itself an exceptional curve of $X$. 

If, furthermore, $X$ is a del Pezzo surface of degree $1$. then the image of $\phi_X$ is isomorphic to a subgroup of the Weyl group $W(E_8)$ (which is a finite group of order $696, 729, 600$). To keep computations reasonable when searching for counterexamples to weak approximation, we work with surfaces $X$ for which $\im\phi_X$ is small. On the other hand, the image cannot be too small: for example, if $\im\phi_X = \{1\}$, then $X$ is $k$-birational to $\PP^2_k$, so it satisfies weak approximation.

%----------------------------------------
\subsection{Minimal surfaces}
There are examples of del Pezzo surfaces of degrees $2$, $3$ and $4$ with a Zariski dense set of rational points for which weak approximation does not hold (cf.~\cite{KreschTschinkel2},~\cite{SD} and~\cite[15.5]{CTSSD}, respectively). These examples can be used to construct nonminimal del Pezzo surfaces of degree $1$ that do not satisfy weak approximation.  To avoid such examples, we will insist that our surfaces be $k$-minimal.

\begin{definition}
We say $X$ is \defi{$k$-minimal} if there is no $\Gal(\kbar/k)$-stable set $S$ of exceptional curves such that $(s_i,s_j) = -\delta_{ij}$ for every $s_i, s_j \in S$.
\end{definition}

Del Pezzo surfaces $X$ with $\Pic X\cong \Z$ are minimal. The converse is true if $d\notin \{1, 2,4\}$; see~\cite[Rem.\ 28.1.1]{Manin}.

%----------------------------------------
\subsection{Brauer-Manin obstructions}
\label{S:BMobstructions}
We refer the reader to~\cite{Skorobogatov} for a thorough treatment of the material in this section.
If $X$ is a smooth projective geometrically integral variety over a number field $k$, then the natural inclusion
$
 X(\Aff_k) \subseteq \prod_{v \in \Omega_k} X(k_v)
$
is a bijection. Let $\Br X$ be the group of equivalence classes of Azumaya algebras on $X$, and let $\inv_v\colon\Br k_v \to \Q/\Z$ be the local invariant map. By class field theory there is a constraint
\begin{equation*}
X(k) \subseteq X(\Aff_k)^{\Br} := \Big\{(x_v)_v \in X(\Aff_k) \, |\,  \sum_v \inv_v(\sA(x_v))  = 0 \text{ for every }\sA \in \Br X \Big\},
\end{equation*}
where $\sA(x_v) := \sA_{x_v}\otimes_{\OO_{X,x_v}}k_v$. In fact, the closure $\overline{X(k)}$ of $X(k)$ in $X(\Aff_k)$ lies inside the set $X(\Aff_k)^{\Br}$. We say there is a \defi{Brauer-Manin obstruction to the Hasse principle} (resp.\ \defi{weak approximation}) if $X(\Aff_k)^{\Br} = \emptyset$ but $X(\Aff_k)\neq \emptyset$ (resp. if  $X(\Aff_k)^{\Br} \neq X(\Aff_k)$). We remark that to compute $X(\Aff_k)^{\Br}$ it suffices to consider a set of representatives of $\Br X/\Br k$. We also note that when $X(\Aff_k)\neq \emptyset$ the natural map $\Br k \to \Br X$ is an injection.

For $X$ as above, we have $\Br X \cong H^2_{\text{\'et}}(X,\G_m)$. This allows us to think of the Brauer group as a contravariant functor with values in the category of abelian groups.

%****************************************************************************
\section{Finding cyclic algebras in $\Br X$}\label{S:cyclic}

When $X$ is a regular, integral, quasi-compact scheme the natural map $\Br X \to \Br k(X)$ is injective (see~\cite[III.2.22]{Milne}). There are certain elements of $\Br k(X)$ whose local invariants are easy to compute.  They are the \defi{cyclic algebras}.

\subsection{Review of cyclic algebras}
\label{S:cyclicalgebras}
Let $L/k$ be a finite cyclic extension of fields of degree $n$. Fix a generator $\sigma$ of $\Gal(L/k)$.
Let $L[x]_\sigma$ be the ``twisted'' polynomial ring, where $\ell x = x{}^\sigma\!\ell$ for all $\ell \in L$.
Given $b \in k^*$ we construct the central simple $k$-algebra $L[x]_\sigma/(x^n - b)$. This algebra is usually denoted $(L/k,b)$: it depends on the choice of $\sigma$, though the notation does not show this.

If $X$ is a geometrically integral $k$-variety, then the cyclic algebra $(k(X_L)/k(X), f)$ is also denoted $(L/k,f)$; this should not cause confusion because $\Gal(k(X_L)/k(X)) \cong \Gal(L/k)$.

The following is a criterion for testing whether or not a cyclic algebra is in the image of the map $\Br X \to \Br k(X)$. For a proof, see~\cite[Prop. 2.2.3]{Corn} or~\cite[Prop. 4.17]{Bright}. See \S\ref{S:notation} for our conventions on the norm maps $N_{L/k}$ and $\N$.

\begin{proposition}
\label{P:normsarecyclic}
Let $X$ be a smooth, geometrically integral variety over a number field $k$. Let $L/k$ a finite cyclic extension and $f \in k(X)^*$. Then the cyclic algebra $(L/k,f)$ is in the image of the natural map $\Br(X) \to \Br k(X)$ if and only if $(f) = N_{L/k}(D)$, for some $D \in \Div X_L$. If $X(k_v) \neq \emptyset$ for all $v\in \Omega_k$ then $(L/k,f)$ comes from $\Br k$ if and only if we can take $D$ to be principal.
\qed
\end{proposition}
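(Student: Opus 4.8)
The plan is to unwind the definition of a cyclic algebra in terms of Galois cohomology and then read off the criterion from a long exact sequence. First I would recall that, for a cyclic extension $L/k$ of degree $n$ with chosen generator $\sigma$, the class of $(L/k, f)$ in $\Br k(X)$ is the cup product of $f \in k(X)^* = H^0(k(X), \G_m)$... more precisely, it is the image of $f$ under the connecting map $H^0(\Gal(L/k), k(X_L)^*) \to H^2(\Gal(L/k), \mu)$ ... the cleanest formulation: $(L/k,f) = \chi \cup (f)$, where $\chi \in H^1(\Gal(L/k), \Q/\Z)$ is the character sending $\sigma$ to $1/n$, and $(f)$ is the class of $f$ in $H^1(k(X), \Z) \to H^2(k(X),\Z)$; equivalently $(L/k,f)$ is the image of $f$ under the periodicity/cup-product map $k(X)^*/N_{L/k}k(X_L)^* = \HHat^0(\Gal(L/k), k(X_L)^*) \xrightarrow{\sim} \HHat^2(\Gal(L/k), k(X_L)^*) \injects \Br k(X)$. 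The upshot I want is the standard fact that $(L/k,f)$ is \emph{split} by $k(X_L)$ and, more to the point, that the subgroup of $\Br k(X)$ split by $L$ is exactly $H^2(\Gal(L/k), k(X_L)^*)$, with the cyclic algebras corresponding to the image of the norm-residue isomorphism from $\HHat^0$.

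Next I would bring $X$ into the picture. Since $X$ is smooth and geometrically integral over $k$, it is in particular regular and integral, so by the injectivity of $\Br X \to \Br k(X)$ (cited in the excerpt from \cite[III.2.22]{Milne}) I may regard $\Br X$ as a subgroup of $\Br k(X)$, and similarly $\Br X_L \subseteq \Br k(X_L)$. The key input is the exact sequence relating the Brauer group of a regular scheme to that of its function field via residues along codimension-one points: an element $\alpha \in \Br k(X)$ lies in $\Br X$ if and only if all its residues $\del_D \alpha$ vanish, for $D$ running over the prime divisors of $X$. I would apply this over both $k$ and $L$, and compare via the norm/corestriction and restriction maps, using functoriality of residues. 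Concretely, for $\alpha = (L/k,f)$, the residue of $\alpha$ at a prime divisor $D$ of $X$ is computed from the valuation $\ord_D(f)$ together with the decomposition behaviour of $D$ in $X_L$; this is the classical residue formula for cyclic algebras, and it shows $\del_D(L/k,f)$ depends only on the divisor class of $\ord_D(f)$ "modulo norms from $L$." Packaging all divisors together, the residues all vanish precisely when the divisor $(f) \in \Div X$ is a norm $N_{L/k}(D)$ for some $D \in \Div X_L$ — giving the first assertion.

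For the second assertion I would use the hypothesis that $X(k_v) \neq \emptyset$ for all $v$, which (as noted in \S\ref{S:BMobstructions}) makes $\Br k \to \Br X$ injective and, more importantly, gives a clean description of $\Br X/\Br k$. Given that $(f) = N_{L/k}(D)$, the class of $(L/k,f)$ in $\Br X/\Br k$ can be analyzed through the commutative diagram comparing the cup-product/norm-residue descriptions for $X$ and for $\Spec k$: replacing $D$ by $D - (g)$ for $g \in k(X_L)^*$ changes $f$ by $N_{k(X_L)/k(X)}(g)$, which does not change the class of $(L/k,f)$ in $\Br k(X)$ at all; so without loss of generality I may adjust $D$ within its class modulo principal divisors. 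The claim is then that $(L/k,f)$ lands in the image of $\Br k$ exactly when $D$ can be taken principal. One direction is immediate: if $D = (g)$ then $(f) = N_{L/k}((g)) = (N_{k(X_L)/k(X)} g)$, so $f = c \cdot N(g)$ for some $c \in k^*$, whence $(L/k,f) = (L/k,c)$ in $\Br k(X)$, which visibly comes from $\Br k$. For the converse I would argue contrapositively using the structure of $\Pic$ and the fact that local solubility forces the relevant torsor/Picard obstruction to vanish: if $(L/k,f)$ comes from $\Br k$, then its image in $\Br k(X_L)/\Br k(X) $ ... rather, one compares with the universal norm description and uses that $\Pic X_L$ injects appropriately; the obstruction to taking $D$ principal is exactly a class in a subquotient of $\Pic X_L$ that maps to the nontrivial part of $(L/k,f)$ in $\Br X/\Br k$. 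I expect the main obstacle to be precisely this last step: making rigorous the claim that, under local solubility, "$(L/k,f) \in \Br k$" forces "$D$ principal" rather than merely "$D$ principal up to a class killed by $N_{L/k}$", which requires carefully tracking the Hochschild–Serre / Leray filtration on $\Br X$ and invoking that $H^1(k, \G_m) = \Pic k = 0$ together with the surjectivity of $\Pic X \to (\Pic X_L)^{\Gal(L/k)}$ coming from $X(\Adeles_k) \neq \emptyset$. Everything else is bookkeeping with residues and cup products.
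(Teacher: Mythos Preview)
The paper does not give its own proof of this proposition; it cites \cite[Prop.~2.2.3]{Corn} and \cite[Prop.~4.17]{Bright} and closes with \qed. So there is no ``paper's approach'' to compare against directly, and your proposal has to be judged on its own merits.

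Your argument for the first assertion is essentially correct and is the standard one. For $X$ smooth one has the purity sequence
\[
0 \to \Br X \to \Br k(X) \xrightarrow{\oplus\partial_x} \bigoplus_{x\in X^{(1)}} H^1(\kappa(x),\Q/\Z),
\]
and for a cyclic class $(L/k,f)=\chi\cup f$ the residue at $x$ is $v_x(f)\cdot\chi_x$, where $\chi_x$ has order $[L_x:\kappa(x)]$ with $L_x$ the residue extension. Since $X_L\to X$ is \'etale, the image of $N_{L/k}\colon\Div X_L\to\Div X$ at $x$ is exactly $[L_x:\kappa(x)]\cdot\Z\cdot x$, so all residues vanish iff $(f)\in N_{L/k}(\Div X_L)$. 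You state this a bit loosely, but the content is there.

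The gap is in the converse of the second assertion. You correctly do the easy direction ($D$ principal $\Rightarrow$ $f=c\cdot N(g)$ $\Rightarrow$ $(L/k,f)=(L/k,c)\in\Br k$), but for the other direction you drift into Hochschild--Serre filtrations and surjectivity of $\Pic X\to(\Pic X_L)^G$ without closing the loop. The clean step you are missing is this: if $(L/k,f)$ equals the image of some $\beta\in\Br k$, you need $\beta$ to be split by $L$, not merely by $k(X_L)$. This follows because $X(k_v)\neq\emptyset$ for all $v$ implies $X_L(L_w)\neq\emptyset$ for all places $w$ of $L$, hence $\Br L\hookrightarrow\Br k(X_L)$; since $(L/k,f)$ dies in $\Br k(X_L)$, so does $\beta$, and injectivity forces $\beta|_L=0$. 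Then $\beta=(L/k,c)$ for some $c\in k^*$, so $(L/k,f/c)=0$ in $\Br k(X)$, i.e.\ $f/c=N_{k(X_L)/k(X)}(g)$, giving $(f)=N_{L/k}((g))$ with $(g)$ principal. Your Picard-group musings are not wrong in spirit, but they do not isolate this point, and without it the argument does not go through.
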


\subsection{Cyclic Azumaya algebras}

Let $X$ be a smooth geometrically integral variety over a number field $k$. Assume that $X(k_v) \neq \emptyset$ for all $v \in \Omega_k$. By functoriality of the Brauer group we have maps $\Br k \to \Br X \to \Br k(X)$, where the first map is an injection  (see~\S\ref{S:BMobstructions}). 

Let $L/k$ be a cyclic extension. Define the set
\[
\Br_{\cyc}(X,L) := \genfrac{\{ }{\} }{0pt}{0}{\text{classes $[(L/k,f)]$ in the image of the}}{\text{map} \Br X/\Br k \to \Br k(X)/\Br k}
\]

\begin{lemma}
\label{L:H1 div is zero}
Viewing $\Delta := 1 - \sigma$ as an endomorphism of $\Div X_L$, we have $\ker N_{L/k} = \im \Delta$.
\end{lemma}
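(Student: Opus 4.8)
The plan is to exhibit $\Div X_L$ as a permutation module for $G := \Gal(L/k) = \langle\sigma\rangle$ and then invoke the standard vanishing of $\hat{H}^{-1}$ of a finite cyclic group on such a module. Write $\pi\colon X_L \to X_k$ for the finite, étale, $G$-equivariant projection ($G$ acting on the second factor of $X_L = X\times_k L$ and trivially on $X_k$). Since $X$ is geometrically integral over the number field $k$, the scheme $X_L$ is integral and normal, so $\Div X_L$ is the free abelian group on the prime divisors of $X_L$, which $G$ permutes. A finite morphism carries codimension-one points to codimension-one points, so each prime divisor $W$ of $X_L$ lies over a unique prime divisor $Z = \pi(W)$ of $X_k$; and since $L/k$ is Galois, the prime divisors of $X_L$ lying over a fixed $Z$ form a single $G$-orbit (equivalently, $G$ acts transitively on $\Spec(k(Z)\otimes_k L)$). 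Thus $\Div X_L = \bigoplus_Z M_Z$, a $G$-stable decomposition indexed by the prime divisors $Z$ of $X_k$, where $M_Z$ is free on the divisors over $Z$ and, as a $G$-module, $M_Z \cong \Z[G/H_Z]$ with $H_Z$ the decomposition group of one such divisor. Both $\Delta = 1-\sigma$ and $N_{L/k}$ respect this decomposition: $\Delta(M_Z)\subseteq M_Z$, and $N_{L/k}(M_Z)\subseteq \Z\cdot Z\subseteq \Div X_k$.

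Next I would pin down $N_{L/k}$ on a single summand $M_Z$. For a prime divisor $W$ over $Z$, the norm $N_{L/k}(W)$ is a positive integer multiple $f_W\cdot Z$ of $Z$ (in fact $f_W = [k(W):k(Z)]$, as $\pi$ is étale hence unramified, but only the following is needed): since $N_{L/k}$ is invariant under $G$ and $G$ permutes the divisors over $Z$ transitively, $f_W$ is the same positive integer for every $W$ in the orbit. Hence $N_{L/k}|_{M_Z}$ is an injective-on-the-image multiple of the augmentation map $M_Z \to \Z$, so $\ker\bigl(N_{L/k}|_{M_Z}\bigr)$ is exactly the augmentation ideal $I_Z\subseteq M_Z$. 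Summing over $Z$ gives $\ker N_{L/k} = \bigoplus_Z I_Z$, while $\im\Delta = \bigoplus_Z (1-\sigma)M_Z$. So the lemma is reduced to the module-theoretic assertion that, for a finite cyclic group $G = \langle\sigma\rangle$ and a subgroup $H$, the image of $1-\sigma$ on $\Z[G/H]$ equals its augmentation ideal $I$.

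For that assertion, $\im(1-\sigma)\subseteq I$ is clear since the augmentation $\varepsilon$ kills $1-\sigma$. Conversely, list the cosets as $e_0 = H,\ e_1 = \sigma H,\ \dots,\ e_{m-1} = \sigma^{m-1}H$ with $m = [G:H]$, so that $\sigma e_j = e_{j+1}$ with indices mod $m$; given $\sum_j a_j e_j \in I$, i.e.\ $\sum_j a_j = 0$, the telescoping identity $\sum_j a_j e_j = \sum_{j=0}^{m-2}(a_0+\dots+a_j)(e_j - e_{j+1}) = \sum_{j=0}^{m-2}(a_0+\dots+a_j)(1-\sigma)e_j$ shows it lies in $\im(1-\sigma)$. (Equivalently, this is the statement $\hat{H}^{-1}(G,\Z[G/H])\cong \hat{H}^{-1}(H,\Z) = 0$ via Shapiro's lemma, the norm on the trivial $H$-module $\Z$ being multiplication by $|H|$.) Feeding this back through the previous paragraph yields $\ker N_{L/k} = \bigoplus_Z (1-\sigma)M_Z = \im\Delta$.

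I do not expect a real obstacle: the mathematical content is the familiar fact that a permutation module over a finite cyclic group has trivial $\hat{H}^{-1}$. The only steps needing care are bookkeeping: verifying that $\Div X_L$ genuinely decomposes into $G$-orbits indexed by the prime divisors of $X_k$ (using integrality/normality of $X_L$ and that $L/k$ is Galois), and that $N_{L/k}$ is compatible with this decomposition with orbit-constant coefficients (using that $\pi$ is unramified, so no extraneous ramification indices intervene).
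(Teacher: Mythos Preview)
Your proposal is correct and follows essentially the same approach as the paper: both observe that $\Div X_L$ is a permutation $\Gal(L/k)$-module and invoke (or, in your case, unwind) Shapiro's lemma to get the vanishing of $\hat H^{-1}$. Your version is more explicit---you spell out the orbit decomposition over prime divisors of $X_k$, check that $N_{L/k}$ restricts to a positive multiple of the augmentation on each summand, and prove the telescoping identity by hand---whereas the paper compresses all of this into the two words ``permutation module'' and a citation of Shapiro.
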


\begin{proof}
By Tate cohomology we know that $H^1(\Gal(L/k),\Div X_L) \cong \ker N_{L/k}/\im \Delta$.
On the other hand, this cohomology group is trivial: $\Div X_L$ is a permutation module, so the result follows from Shapiro's Lemma.
\end{proof}

The ideas behind the following theorem can be found in~\cite[\S4.3.2, esp.\ Lem. 4.18]{Bright}. 

\begin{theorem}
\label{T:cyclicalgebras}
Let $X$ be a $k$-variety as above. Let $H$ be an open normal subgroup of $G := \Gal(\kbar/k)$, such that $G/H$ is cyclic, generated by $\sigma$. Let $L$ be the fixed field of $H$. The map
\begin{equation*}
\psi\colon \ker \N/\im\Delta \to \Br_{\cyc}(X,L) \qquad
[D] \mapsto [(L/k,f)],
\end{equation*}
where $f \in k(X)^*$ is any function such that $N_{L/k}(D) = (f)$, is a group isomorphism.
\end{theorem}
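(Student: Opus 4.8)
The plan is to show $\psi$ is well-defined, a homomorphism, surjective, and injective, in that order. First I would check well-definedness. Given $[D] \in \ker\N/\im\Delta$, pick a representative $D \in \Div X_L$ with $[\N D] = 0$ in $\Pic X_k$, so that $N_{L/k}(D)$ is a principal divisor $(f)$ for some $f \in k(X)^*$. Then Proposition~\ref{P:normsarecyclic} tells us $(L/k,f)$ lies in the image of $\Br X \to \Br k(X)$, so its class in $\Br k(X)/\Br k$ lies in $\Br_{\cyc}(X,L)$. I must check that $[(L/k,f)]$ depends neither on the choice of $f$ (any two choices differ by an element of $k^*$, hence give the same class in $\Br k(X)/\Br k$ since $(L/k,c)$ comes from $\Br k$ for $c \in k^*$), nor on the choice of divisor $D$ within its class mod $\im\Delta$: if $D' = D + \Delta(E)$ then $N_{L/k}(D') = N_{L/k}(D)$, since $N_{L/k}\circ\Delta = 0$; so the \emph{same} $f$ works, and the class is unchanged. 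That $\psi$ is a homomorphism is then immediate from the identity $(L/k,f_1)\otimes(L/k,f_2) \sim (L/k,f_1 f_2)$ in the Brauer group of $k(X)$, together with additivity of the norm map on divisors.

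Surjectivity is essentially the definition of $\Br_{\cyc}(X,L)$: an arbitrary element is a class $[(L/k,f)]$ lying in the image of $\Br X/\Br k$, hence (by Proposition~\ref{P:normsarecyclic}) $(f) = N_{L/k}(D)$ for some $D \in \Div X_L$, and this $D$ automatically satisfies $\N[D] = 0$ in $\Pic X_k$ because $(f)$ is principal on $X_k$; so $[D] \in \ker\N/\im\Delta$ and $\psi([D]) = [(L/k,f)]$.

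The injectivity is the step I expect to be the crux. Suppose $\psi([D]) = 0$, i.e.\ $(L/k,f)$, with $(f) = N_{L/k}(D)$, maps into the image of $\Br k \to \Br k(X)$. By the last sentence of Proposition~\ref{P:normsarecyclic} (using local solubility everywhere, which we are assuming), this forces $(f) = N_{L/k}(D_0)$ for some \emph{principal} divisor $D_0 \in \Div X_L$. Then $D - D_0 \in \ker N_{L/k}$, so by Lemma~\ref{L:H1 div is zero} we have $D - D_0 \in \im\Delta$; hence $[D] = [D_0]$ in $\ker\N/\im\Delta$, and since $D_0$ is principal, $[D_0]$ maps to $0$ in $\Pic X_L$, so $[D] = 0$ already in $\ker\N/\im\Delta$ (here $\ker\N$ is taken inside $\Pic X_L$, so principal divisors are zero). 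Thus $\psi$ is injective. The one subtlety to be careful about is bookkeeping between divisors and divisor classes — the map $\Delta$ and the norm appear both on $\Div X_L$ (Lemma~\ref{L:H1 div is zero}) and on $\Pic X_L$ (the source of $\psi$) — so I would state clearly at the outset that $\ker\N/\im\Delta$ is formed in $\Pic X_L$, and use the surjection $\Div X_L \twoheadrightarrow \Pic X_L$ to lift and compare representatives, noting that $\Delta$ is compatible with this surjection.
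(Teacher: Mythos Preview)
Your proposal is correct and follows essentially the same route as the paper: both use Proposition~\ref{P:normsarecyclic} for well-definedness and surjectivity, and Lemma~\ref{L:H1 div is zero} for injectivity. The only gap is in your well-definedness check. You verify independence of the choice of $f$ and of the representative modulo $\im\Delta$, but you do not actually verify independence of the \emph{divisor lift} of a class in $\Pic X_L$---you only flag this as a ``subtlety'' at the end. The paper treats this as a separate, explicit check: if $D = D' + (h)$ for some $h \in k(X_L)^*$, then $(f/f') = N_{L/k}((h))$, so by Proposition~\ref{P:normsarecyclic} the class $[(L/k,f/f')]$ is trivial. This is short but not vacuous (it uses the ``if'' direction of the second sentence of Proposition~\ref{P:normsarecyclic}), so you should write it out rather than defer it.
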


\begin{proof}
First we check $\psi$ is well-defined by showing that
\begin{enumerate}
\item the class $[(L/k,f)]$ is independent of the choice of $f$: if $N_{L/k}(D) = (f) = (g)$, then $g = af$ for some $a \in k^*$. Since $(L/k,a)\in \Br k$, we obtain $[(L/k,f)] = [(L/k,g)]$.
\item if $D$ and $D'$ are linearly equivalent divisors in $\ker \N$, with $N_{L/k}(D) = (f)$ and $N_{L/k}(D') = (f')$, then $[(L/k,f)] = [(L/k,f')]$: equivalently, by Proposition~\ref{P:normsarecyclic} we need $(f/f')$ to be the norm of a principal divisor. Say $D = D' + (h)$. Then $(f/f') = N_{L/k}((h))$.
\item an element in $\im\Delta$ maps to zero: this is trivial.
\end{enumerate}

If $N_{L/k}(D) = (f)$ and $N_{L/k}(D') = (g)$ then
\begin{equation}
\label{E:homomorphism}
\psi([D] + [D']) = \psi([D + D']) = [(L/k,fg)] = [(L/k,f)] + [(L/k,g)] = \psi([D]) + \psi([D']),
\end{equation}
so $\psi$ is a homomorphism. The map $\psi$ is injective: if $\psi([D]) = [(L/k,f)]$ is $0$ in $\Br k(X)/\Br k$, then by Proposition~\ref{P:normsarecyclic} there exists an $h \in k(X_L)^*$ such that $(f) = N_{L/k}((h))$. Hence $D - (h) \in \ker N_{L/k} = \im \Delta$ (see Lemma~\ref{L:H1 div is zero}). Surjectivity also follows from Proposition~\ref{P:normsarecyclic}: given a class $[(L/k,f)]$, take any divisor $D$ such that $N_{L/k}(D) = (f)$; then $\psi([D]) = [(L/k,f)]$.
\end{proof}

\subsection{Cyclic algebras on rational surfaces}
\label{S:strategyondPs}
Let $X$ be a smooth, projective, geometrically integral rational surface over a number field $k$, and let $K$ be the splitting field of $X$. Assume that $X(\Aff_k) \neq \emptyset$. The inflation map
\begin{equation}
\label{E:inflation}
H^1(\Gal(K/k),\Pic X_K) \to H^1(\Gal(\kbar/k),\Pic \Xbar)
\end{equation}
is an isomorphism, because the cokernel maps into the first cohomology group of a free $\Z$-module with trivial action by a profinite group, so it is trivial. By~(\ref{E:HochshildSerreIso}) it follows that
\begin{equation}
\label{E:HSisom}
\Br X /\Br k \cong H^1(\Gal(K/k),\Pic X_K).
\end{equation}
Let $G = \Gal(K/k)$ and suppose that $H$ is a normal subgroup of $G$ such that $G/H$ is cyclic. Let $L$ be the fixed field of $H$. Since $X(\Aff_k) \neq \emptyset$, it follows from the Hochschild-Serre spectral sequence and~(\ref{E:HochshildSerreIso}) that
\begin{equation}
(\Pic X_K)^H \cong \Pic X_{L}.
\label{E:lower}
\end{equation}
We obtain an injection
\begin{equation}
\label{E:crucialinjection}
H^1(\Gal(L/k),\Pic X_{L}) \xrightarrow{\text{inf}} H^1(G,\Pic X_{K}) \cong \Br X /\Br k
\end{equation}
On the other hand, by Tate cohomology we know that
\[
H^1(\Gal(L/k),\Pic X_L) \cong \ker\N/\im \Delta.
\]
We can use Theorem~\ref{T:cyclicalgebras} to write down cyclic algebras $(L/k,f)$ in the image of the injection $\Br X/\Br k \to \Br k(X)/\Br k$. Since $G$ is finite, we may search through its subgroup lattice to find subgroups $H$ as above, and hence write down all the cyclic algebras in $\Br X/\Br k$.

It will be important for us to determine the functions $f$ above explicitly; to this end, we must make the isomorphism~(\ref{E:lower}) explicit. This is explained in the Appendix. 

\begin{remark}
Finding Brauer-Manin obstructions to the \emph{Hasse principle} on del Pezzo surfaces of degree greater than $1$ may require the injection~(\ref{E:crucialinjection}) to be an isomorphism. (This will be the case, for example, if $H^1(H,\Pic X_K) = 0$). We may need representative Azumaya algebras for \emph{every} class in $\Br X/\Br k$ to detect a Brauer-Manin obstruction (for example, see~\cite[9.4]{Corn2}). Obstructions to weak approximation only require one Azumaya algebra.
\end{remark}
\begin{remark}
Let $X$ be a diagonal del Pezzo surface of degree 1 over $\Q$ such that the order of $\Br X/\Br \Q$ is divisible by $3$. Let $K$ be the splitting field of $X$. Then an exhaustive computer search reveals that there does not exist a normal subgroup $H$ of $G := \Gal(K/\Q)$ such that $|G/H|$ is divisible by $3$. This means that any counterexamples to weak approximation over $\Q$ we find using the above strategy will always arise from $2$-torsion Azumaya algebras.
\end{remark}
\begin{remark}
Not all Brauer-Manin obstructions on del Pezzo surfaces arise from cyclic algebras: for example, see~\cite[Ex. 8]{KreschTschinkel}.
\end{remark}

%---------------------------
\section{Exceptional curves on del Pezzo surfaces of degree $1$}\label{S:lines}

In this section we assume $k$ is algebraically closed. Let 
\[
\Gamma := V(z - Q(x,y), w - C(x,y)) \subseteq \PP_k(1,1,2,3),
\]
where $Q(x,y)$ and $C(x,y)$ are homogenous forms of degrees $2$ and $3$, respectively, in $k[x,y]$.
Define $\Gamma'$ as the image of $\Gamma$ under the Bertini involution (see \S\ref{SS:Bertini}). Note that $\Gamma \neq \Gamma'$.

\begin{lemma}
\label{L:Intersection3}
Let $X$ be a del Pezzo surface of degree $1$, given as a sextic hypersurface in $\PP_k(1,1,2,3)$.
If $\Gamma$ is a divisor on $X$ then so is $\Gamma'$; in this case $(\Gamma,\Gamma')_X = 3$.
\end{lemma}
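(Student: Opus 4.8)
The plan is to describe $\Gamma'$ explicitly as a curve of the same shape as $\Gamma$, and then compute the scheme-theoretic intersection $\Gamma\cap\Gamma'$ directly and read off its length. Only two facts are needed: that $X$ is a smooth surface, and that the Bertini involution $\iota$ of \S\ref{SS:Bertini} is an automorphism of $X$; in particular this does not use Theorem~\ref{T:lines}.

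Write $f = w^2 - aw - b$ with $a\in k[x,y,z]$ of weighted degree $3$ and $b$ of weighted degree $6$, so that $\iota\colon[x:y:z:w]\mapsto[x:y:z:-w+a(x,y,z)]$ is an automorphism of $\PP_k(1,1,2,3)$ restricting to an automorphism of $X$. Decomposing $a = a_3(x,y) + a_1(x,y)z$ with $a_i$ homogeneous of degree $i$ in $x,y$, one checks that $\iota$ carries the point $[x:y:Q(x,y):C(x,y)]$ of $\Gamma$ to $[x:y:Q(x,y):C'(x,y)]$, where $C'(x,y):=-C(x,y)+a_3(x,y)+a_1(x,y)Q(x,y)$ is again a homogeneous cubic form; hence $\Gamma' = V(z-Q,\,w-C')$. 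Since $\iota$ is an automorphism of $X$ carrying the closed subscheme $\Gamma\subseteq X$ isomorphically onto $\Gamma'$, the latter is an integral curve on the smooth surface $X$, hence a prime divisor; this settles the first assertion.

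For the intersection number I would work on $\Gamma$ itself. As a closed subscheme of $\PP_k(1,1,2,3)$ we have $\Gamma = \Proj k[x,y,z,w]/(z-Q,\,w-C)\cong\PP^1$, with homogeneous coordinate ring $k[x,y]$ and with the restriction of a form $g(x,y,z,w)$ equal to $g(x,y,Q(x,y),C(x,y))$. Since $\Gamma$ and $\Gamma'$ are closed subschemes of $X$, the scheme-theoretic intersection $\Gamma\cap\Gamma'$ (formed in $X$, or equivalently in the ambient weighted projective space) is the closed subscheme of $\Gamma$ cut out by the restrictions of the defining equations $z-Q$ and $w-C'$ of $\Gamma'$; the first restricts to $0$ and the second to $C-C'$, so $\Gamma\cap\Gamma' = V(C-C')\subseteq\Gamma\cong\PP^1$. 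Because $\Gamma\neq\Gamma'$ the cubic form $C-C'$ is nonzero, so $\Gamma\cap\Gamma'$ is the zero scheme of a nonzero cubic form on $\PP^1$, a $0$-dimensional subscheme of length $3$. As $X$ is a smooth projective surface and $\Gamma,\Gamma'$ are distinct irreducible curves (hence effective Cartier divisors sharing no component), $(\Gamma,\Gamma')_X$ equals this length, namely $3$.

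The computation is routine once the setup is in place; the one step demanding a little care is the identification of $\Gamma\cap\Gamma'$ with $V(C-C')$ inside $\Gamma$ — that restricting the equation $w-C'$ of $\Gamma'$ to $\Gamma$ yields exactly the cubic form $C-C'$, and that the resulting intersection multiplicities may be read off in the ambient space rather than intrinsically on $X$. Both are formal consequences of $\Gamma$ and $\Gamma'$ being genuine closed subschemes of $X$ cut out by the displayed equations; the weights of the variables enter only in recording that $C-C'$ has degree $3$.
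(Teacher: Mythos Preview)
Your proof is correct and follows essentially the same approach as the paper: compute the scheme-theoretic intersection $\Gamma\cap\Gamma'$ explicitly and read off its degree as a subscheme of $\PP^1$. The only difference is presentational: the paper splits into the cases $\Char k\neq 2$ (where one may assume $a=0$, so $C'=-C$ and the intersection ideal becomes $(z-Q,w,C)$) and $\Char k=2$, whereas you treat both at once by writing $C'=-C+a_3+a_1Q$ and cutting out $V(C-C')$ inside $\Gamma\cong\PP^1$. Your uniform treatment is marginally cleaner, but the underlying computation is the same.
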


\begin{proof}
It is clear that if $\Gamma$ is a divisor on $X$ then so is $\Gamma'$.  Assume first that $\Char k \neq 2$. Note $(\Gamma,\Gamma')_X$ is equal to the degree of the scheme $\Gamma\cap\Gamma'$, whose defining ideal is
\[
(z - Q(x,y),w - C(x,y),w + C(x,y)) = (z - Q(x,y),w,C(x,y)).
\]
We compute
\begin{align*}
\deg(\Proj k[x,y,z,w]/(z-Q,w,C)) &= \deg(\Proj k[x,y]/(C)) = 3.
\end{align*}
When $\Char k = 2$, the ideal of $\Gamma\cap\Gamma'$ is $(z + Q(x,y),w+ C(x,y),a)$. 
A calculation similar to the one above shows that $(\Gamma,\Gamma')_{X} = 3$.
\end{proof}

%------------------------------------------------
\subsection{The bianticanonical map}

Let $X$ be a del Pezzo surface of degree $1$ over $k$. The map
\[
\phi_2\colon X\to \PP(H^0(X,-2K_X)^*) = \PP^3_k.
\]
is known as the \defi{bianticanonical map}.  If $X = V(f(x,y,z,w)) \subseteq \PP_k(1,1,2,3)$, then the basis elements $x^2,xy,y^2,z$ for $H^0(X,-2K_X)$ are homogeneous coordinates for $\phi_2$ (see \S\ref{S:anticanonical models}).
Let $T_0,\dots,T_3$ be coordinates for $\PP^3_k$.  The map $\phi_2$ is $2$-to-$1$ onto the quadric cone $\mathcal{Q} = V(T_0T_2 - T_1^2)$. This cone is in turn isomorphic to the space $\PP_k(1,1,2)$ via the map
\begin{equation*}
j \colon \PP_k(1,1,2) \to  \mathcal{Q},\quad
[x:y:z] \mapsto [x^2:xy:y^2:z].
\end{equation*}
The composition $j^{-1}\circ \phi_2\colon X \to \PP_k(1,1,2)$ is just the restriction to $X$ of the natural projection $\PP_k(1,1,2,3) \dashrightarrow \PP_k(1,1,2)$. We fix the notation $\pi_2 := j^{-1}\circ \phi_2$ for future reference. 

\begin{lemma}[\cite{Cragnolini}]
\label{L:bianticanonical}
Let $V$ denote the vertex of the cone $\mathcal{Q}$, and let $\Gamma$ be an exceptional curve on $X$. Then $\phi_2|_\Gamma\colon \Gamma \to \phi_2(\Gamma)$ is $1$-to-$1$ and $\phi_2(\Gamma)$ is a smooth conic, the intersection of $\mathcal{Q}$ with a hyperplane $H$ that misses $V$.
\qed
\end{lemma}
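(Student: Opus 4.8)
The plan is to determine $\phi_2^{-1}(V)$, observe that it is disjoint from every exceptional curve, and then read the statement off from the numerical data of $\Gamma$ together with the facts that $\phi_2$ is defined by $|{-2K_X}|$ and maps onto the cone $\mathcal{Q}$.

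First I would record a few facts about $\Gamma$. Since $\phi_2$ is given by the complete linear system $|{-2K_X}|$, we have $\phi_2^{*}\mathcal{O}_{\PP^3}(1) = \mathcal{O}_X(-2K_X)$, so $d\cdot e = (\Gamma, -2K_X) = -2(\Gamma, K_X) = 2$, where $e = \deg \phi_2(\Gamma)$ and $d$ is the degree of $\phi_2|_\Gamma$ onto its image. Also, adjunction gives $2p_a(\Gamma) - 2 = (\Gamma,\Gamma) + (\Gamma, K_X) = -2$, so $\Gamma$ is a smooth rational curve. Hence $(d,e) \in \{(1,2),(2,1)\}$. In the case $(1,2)$, $\phi_2|_\Gamma$ is birational onto its image; that image, being an irreducible degree-$2$ curve in $\PP^3$, is a smooth plane conic, and since a birational morphism between smooth projective curves is an isomorphism, $\phi_2|_\Gamma$ is injective. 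In the case $(2,1)$, $\phi_2(\Gamma)$ is a line contained in $\phi_2(X) = \mathcal{Q}$.

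To rule out the case $(2,1)$ I would locate $\phi_2^{-1}(V)$. The vertex $V = [0:0:0:1]$ of $\mathcal{Q}$ equals $j([0:0:1])$, and since $\pi_2 = j^{-1}\circ\phi_2$ is the restriction to $X$ of the projection $[x:y:z:w] \mapsto [x:y:z]$, we get $\phi_2^{-1}(V) = X \cap V(x,y)$. On $V(x,y)$ the equation of $X$ reads $w^2 = z^3$, whose only solution in $\PP_{\kbar}(1,1,2,3)$ is the point $[0:0:1:1]$ --- the anticanonical point, i.e.\ the base point of $|{-K_X}|$. An exceptional curve $\Gamma = V(z - Q(x,y),\, w - C(x,y))$ is the image of the map $\PP^1 \to \PP_{\kbar}(1,1,2,3)$, $[x:y] \mapsto [x:y:Q(x,y):C(x,y)]$, so it meets $V(x,y)$ only where $x = y = 0$, i.e.\ at $[0:0:0:0]$, which is not a point. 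Thus $\Gamma \cap \phi_2^{-1}(V) = \emptyset$, so $V \notin \phi_2(\Gamma)$; since every line in the cone $\mathcal{Q}$ passes through $V$, the case $(2,1)$ cannot occur. Therefore $\phi_2|_\Gamma$ is one-to-one and $\phi_2(\Gamma)$ is a smooth conic.

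Finally, the smooth conic $\phi_2(\Gamma)$ spans a unique plane $H \subseteq \PP^3$, and $\phi_2(\Gamma) \subseteq \mathcal{Q}\cap H$. As the irreducible quadric $\mathcal{Q}$ contains no plane, $H \not\subseteq \mathcal{Q}$, so $\mathcal{Q}\cap H$ is a degree-$2$ curve in $H \cong \PP^2$; containing the irreducible conic $\phi_2(\Gamma)$, it must equal it, giving $\phi_2(\Gamma) = \mathcal{Q}\cap H$. If $V$ lay on $H$ then $V \in \mathcal{Q}\cap H = \phi_2(\Gamma)$, contradicting the previous paragraph, so $H$ misses $V$. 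I expect the only genuine obstacle to be the weighted-projective bookkeeping behind the identification of $\phi_2^{-1}(V) = X\cap V(x,y)$ with the anticanonical point and the verification that no exceptional curve passes through it; everything else is forced by the intersection number $(\Gamma, -2K_X) = 2$ and the elementary geometry of the quadric cone.
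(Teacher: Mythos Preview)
Your overall strategy is sound and essentially the natural one: use $(\Gamma,-2K_X)=2$ to reduce to the dichotomy $(d,e)\in\{(1,2),(2,1)\}$, rule out the line case, and then read off the hyperplane description from elementary geometry of the cone. Note that the paper itself does not prove this lemma; it simply cites Cragnolini--Oliverio, so there is nothing to compare against beyond checking that your argument stands on its own.

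There is, however, a genuine circularity in your proof as written. To show that $\Gamma$ misses $\phi_2^{-1}(V)$ you invoke the explicit form $\Gamma=V(z-Q(x,y),\,w-C(x,y))$. That description of an arbitrary exceptional curve is exactly the converse direction of Theorem~\ref{T:lines}, and the paper proves that converse \emph{using} Lemma~\ref{L:bianticanonical}. So within the logical structure of the paper you are assuming what this lemma is needed to establish.

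The fix is short and avoids the anticanonical point entirely. Suppose $(d,e)=(2,1)$, so $L:=\phi_2(\Gamma)$ is a line on $\calQ$; every such line is a ruling through $V$, hence $j^{-1}(L)=V(\alpha x+\beta y)\subset\PP_k(1,1,2)$ for some $[\alpha:\beta]$. Then $\pi_2^{-1}(j^{-1}(L))=X\cap V(\alpha x+\beta y)\in|{-K_X}|$, and $\Gamma$ is contained in this anticanonical divisor $D$. Writing $D=\Gamma+D'$ with $D'\ge 0$ and using $(-K_X,D)=1=(-K_X,\Gamma)$ gives $(-K_X,D')=0$; ampleness of $-K_X$ forces $D'=0$, so $\Gamma=D$. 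But then $\Gamma^2=(-K_X)^2=1$, contradicting $\Gamma^2=-1$. This rules out $(2,1)$ without appealing to Theorem~\ref{T:lines}.

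For the final clause ($V\notin H$), you do not need the earlier step either: once $\phi_2(\Gamma)=\calQ\cap H$ is a smooth conic, observe that any plane through the vertex $V$ meets the cone $\calQ$ in a union of at most two rulings (or a double line), never in a smooth conic. Hence $V\notin H$ is automatic.
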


\begin{remark}
The image of the anticanonical point under $\phi_2$ is $V\in \mathcal{Q}$.  By Lemma~\ref{L:bianticanonical}, the anticanonical point does not lie on any exceptional curve of $X$.
\label{R:anticanonicalpoint}
\end{remark}

%-------------------------------------------------------
\subsection{Proof of Theorem~\ref{T:lines}}

\begin{proof}[Proof of Theorem~\ref{T:lines}]
We may assume $k$ is algebraically closed, as the statement of the theorem is geometric. First, we show that any $\Gamma$ as in the theorem is an exceptional curve by proving that $(\Gamma,K_X)_X = (\Gamma,\Gamma)_X = -1$. Note $V(x) \in |{-K}_X|$. Hence
\[
(\Gamma,{-K}_X)_X = \deg(\Proj k[x,y,z,w]/(z - Q, w - C, x)) = \deg(\Proj k[y]) = 1.
\]
Let $D = V(z - Q(x,y))\subseteq \PP_k(1,1,2)$. Since $D$ is isomorphic under $j$ to a hyperplane section of the cone $\calQ$, we have $\pi_2^*(D) \in |{-2K_X}|$, so $(\pi_2^*(D),\pi_2^*(D))_X = 4$. Define $\Gamma'$ as the image of $\Gamma$ under the Bertini involution (see~\S\ref{SS:Bertini}). By Lemma~\ref{L:Intersection3}, $\Gamma'$ is a divisor on $X$. Since $\Gamma + \Gamma' \subseteq \pi_2^*(D)$, the divisor $\pi_2^*(D)$ is reducible, and since $\deg \pi_2 = 2$, it must consist of two distinct components with multiplicity $1$ (because $\Gamma \neq \Gamma'$), that is, $\Gamma + \Gamma' = \pi_2^*(D)$. The Bertini involution interchanges $\Gamma$ and $\Gamma'$, so we must have $(\Gamma, \Gamma + \Gamma')_X =  (\Gamma',\Gamma+\Gamma')_X = 2$. Thus $(\Gamma,\Gamma)_X = -1$ if and only if $(\Gamma,\Gamma')_X = 3$, but this follows from Lemma~\ref{L:Intersection3}. Hence $\Gamma$ is an exceptional curve. As above, we can show that $(\Gamma',-K_X)_X = 1$, so $\Gamma'$ is also an exceptional curve. \\

Now we prove the converse. Let $\Gamma$ be an exceptional curve on $X$.  By Lemma~\ref{L:bianticanonical} we know $\phi_2(\Gamma)$ is a smooth conic. It is isomorphic under the map $j$ to the curve $\pi_2(\Gamma)$ in $\PP_k(1,1,2)$. The equation for the conic in $\PP_k(1,1,2)$ can be written as $z = Q(x,y)$, where $Q(x,y)$ is homogenous of degree $2$ in $k[x,y]$ (the coefficient of $z$ is non-zero because $\phi_2(\Gamma)$ misses the vertex $V$ of the cone $\mathcal{Q}$). 

Let $D = V(z - Q(x,y))\subseteq \PP_k(1,1,2)$, as before. We have shown that $\Gamma \subseteq \pi_2^*(D)$.  Since $\pi_2^*(D) \in |{-2K_X}|$ as above, we have $(\pi_2^*(D),\Gamma)_X = 2$. If $\pi_2^*(D) = m\Gamma$ for some $m\geq 1$ then 
\[
2 = (\pi_2^*(D),\Gamma)_X = m(\Gamma,\Gamma)_X = -m,
\]
 a contradiction. Hence $\pi_2^*(D)$ is reducible, and $\pi_2^*(D) = \Gamma + \Gamma_1$ for some irreducible divisor $\Gamma_1\neq \Gamma$. Note that
\[
(\Gamma_1,\Gamma_1)_X = (\pi_2^*(D) - \Gamma,\pi_2^*(D) - \Gamma)_X = (-2K_X - \Gamma,-2K_X - \Gamma)_X= -1,
\]
and similarly $(\Gamma_1,{-K}_X)_X = 1$, so $\Gamma_1$ is an exceptional curve of $X$. We have
\[
\pi_2^*(D) = V(f(x,y,z,w), z - Q(x,y)).
\]
On the affine open subset where $x \neq 0$, the coordinate ring of $\pi_2^*(D)$ is 
\[
k[y,z,w]/(f(1,y,z,w),z - Q(1,y)) \cong k[y,w]/(f(1,y,Q(1,y),w)).
\]
Since $\pi_2^*(D)$ is reducible, the polynomial $f(1,y,Q(1,y),w)$ must factor, and degree considerations force a factorization of the following form:
\[
(w - C(1,y))(w - C'(1,y)),
\]
where $C(x,y)$ and $C'(x,y)$ are homogeneous forms of degree $3$. Hence $\Gamma$ has the form we claimed.
\end{proof}

\begin{remark}
The divisor $\Gamma_1$ in the proof above is the image of $\Gamma$ under the Bertini involution.
\end{remark}

\begin{remark}
We have used several ideas from the proof of~\cite[Key-lemma 2.7]{Cragnolini} to prove Theorem~\ref{T:lines}. The theorem can also be deduced from the work of Shioda on rational elliptic surfaces $S\to \PP^1$ (see~\cite[Thm.\ 10.10]{Shioda2}).  Shioda shows that rational elliptic surfaces have at most $240$ sections $\PP^1 \to S$ of a particular form, whose description bares a striking resemblance to the divisors of the form $\Gamma$ above.  A rational elliptic surface (over an algebraically closed field) with exactly $240$ of these special sections corresponds to the blow up of a del Pezzo surface $X$ of degree $1$ with center at the anticanonical point; the special sections of the elliptic surface are in one to one correspondence with the exceptional curves of $X$.  Under this correspondence, Shioda's explicit description of the $240$ sections becomes the explicit description of the exceptional curves of Theorem~\ref{T:lines}.  Cragnolini and Oliverio have a somewhat different description of the exceptional curves on a del Pezzo surface of degree $1$~\cite[Key-lemma 2.7]{Cragnolini} (see also~\cite[p. 68]{Demazure1980}).
\end{remark}

\begin{remark}
\label{R:unambiguous}
Suppose $k$ is not algebraically closed. The Bertini involution interchanges $\Gamma$ and $\Gamma'$; since it is defined over $k$ we conclude that
\[
{}^\sigma\! (\Gamma') = ({}^\sigma\Gamma)'\quad\text{for all } \sigma \in \Gal(\kbar/k).
\]
We will therefore use the unambiguous notation ${}^\sigma\Gamma'$ for this divisor.
\end{remark}

%****************************************************************************
\section{Exceptional curves on diagonal surfaces}
\label{S:CurvesOnDiagonals}

We begin by studying the particular surface $Y$ given by the sextic $w^2 = z^3 + x^6 + y^6$ in $\PP_k(1,1,2,3)$. Suppose first that $k = \Qbar$. By Theorem~\ref{T:lines}, the exceptional curves on $Y$ are given as $V(w - C(x,y),z - Q(x,y))$, where
\[
C(x,y)^2 = Q(x,y)^3 + x^6 + y^6.
\]
Using Gr\"obner bases in {\tt Magma} to solve for the coefficients of $Q$ and $C$, we find $240$ exceptional curves, all defined over $\Q(\sqrt[3]{2},\zeta)$.

If $k$ is algebraically closed of characteristic $0$ the equations for the exceptional curves we calculated over $\Qbar$ give exceptional curves over $k$ via an embedding $\iota\colon\Qbar \hookrightarrow k$.

Now suppose $k$ is algebraically closed of characteristic $p > 3$. Let $W(k)$ be the ring of Witt vectors of $k$, and let $F(k)$ be its field of fractions. Let $\calX$ be the del Pezzo surface over $W(k)$ given by the equation $w^2 = z^3 + x^6 + y^6$ in $\PP_{W(k)}(1,1,2,3)$. The generic fiber of $\calX$ is a del Pezzo surface over $F(k)$. We may write down its $240$ exceptional curves as above: even though $F(k)$ is not algebraically closed, we may embed $\Q(\sqrt[3]{2},\zeta)$ in it, and this is enough to write down equations for all the exceptional curves.

The usual specialization map $\theta\colon \Pic\calX_{F(k)} \to \Pic \calX_k$ is a homomorphism (see~\cite[\S 20.3]{Fulton}). In other words, $\theta$ preserves the intersection pairings on $\Pic\calX_{F(k)}$ and $\Pic \calX_k$; it is injective because the pairing on $\Pic \calX_{F(k)}$ is nondegenerate. A standard computation shows that $\theta(K_{\calX_{F(k)}})=K_{\calX_k}$ (see~\cite[\S 20.3.1]{Fulton}). Hence $\theta$ maps exceptional curves to exceptional curves. The injectivity of $\theta$ then shows that the $240$ exceptional curves on $\calX_{F(k)}$ specialize to $240$ \emph{distinct} exceptional curves. \\

Let us drop the assumption that $k$ is algebraically closed. We turn to the general diagonal surface $X$ over $k$, given by $w^2 = z^3 + Ax^6 + By^6$. Fix a sixth root $\alpha$ of $A$
and a sixth root $\beta$ of $B$ in $\kbar$. If $\Gamma = V(z - Q(x,y), w - C(x,y))$ is an exceptional curve on $w^2 = z^3 + x^6 + y^6$, then $V(z - Q(\alpha x,\beta y), w - C(\alpha x,\beta y))$ is an exceptional curve on $X$, and vice versa. We deduce that the splitting field of $X$ is contained in $k(\zeta,\sqrt[3]{2},\alpha,\beta)$. 
 
\begin{proposition}
Let $k$ be a perfect field with $\Char p \neq 2,3$. Let $X$ be the del Pezzo surface of degree $1$ over $k$ given by 
\[
w^2 = z^3 + Ax^6 + By^6,
\]
in $\PP_{k}(1,1,2,3)$. Then the splitting field of $X$ is $K := k(\zeta,\sqrt[3]{2},\alpha,\beta)$. 
\label{P:Fieldofdefn}
\end{proposition}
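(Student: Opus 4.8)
The plan is to prove the two inclusions $K \subseteq k(\zeta,\sqrt[3]{2},\alpha,\beta)$ and $k(\zeta,\sqrt[3]{2},\alpha,\beta) \subseteq K$ separately, where $K$ denotes the splitting field of $X$ (the smallest field over which all $240$ exceptional curves are defined). The first inclusion has essentially been established in the paragraph preceding the statement: by Theorem~\ref{T:lines}, every exceptional curve on $X$ has the form $V(z - Q(x,y), w - C(x,y))$; the substitution $x \mapsto \alpha x$, $y \mapsto \beta y$ sets up a bijection between exceptional curves on $Y\colon w^2 = z^3 + x^6 + y^6$ and those on $X$; and the explicit Gr\"obner-basis computation shows the $240$ curves on $Y$ are all defined over $\Q(\sqrt[3]{2},\zeta)$, hence (via $\iota$ or the Witt-vector specialization argument already given) over $k(\sqrt[3]{2},\zeta)$. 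Applying the substitution shows the curves on $X$ are defined over $k(\zeta,\sqrt[3]{2},\alpha,\beta)$. So $K \subseteq k(\zeta,\sqrt[3]{2},\alpha,\beta)$, and it remains to prove the reverse inclusion.

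For the reverse inclusion, I would argue that each of $\zeta$, $\sqrt[3]{2}$, $\alpha$, $\beta$ must lie in $K$ by exhibiting, among the $240$ explicit curves, ones whose coefficients force these elements into any field of definition. Concretely, from the {\tt Magma} computation one has explicit forms $Q, C$ for the curves on $Y$; the simplest ones are of the type $z - Q(x,y)$ with $Q$ having a coefficient like $\sqrt[3]{2}$ or a sixth root of unity, and after the substitution $x\mapsto \alpha x$, $y \mapsto \beta y$ these coefficients pick up factors of $\alpha^i\beta^j$. For instance: a curve on $X$ of the form $V(z - c\,\zeta^a x^2, \, w - \cdots)$ with $c \in k^*$ shows $\zeta \in K$ (comparing two Galois-conjugate such curves); a curve whose $Q$ involves $\sqrt[3]{2}$ forces $\sqrt[3]{2}\in K$; and curves that are "purely in $x$" or "purely in $y$" — i.e.\ $\Gamma = V(z - \lambda \alpha^2 x^2, w - \mu\alpha^3 x^3)$ with $\lambda,\mu$ absolute constants — have a coefficient involving $\alpha^3 = \sqrt{A}$ (and, with a sixth-root twist, $\alpha$ itself), so $\alpha \in K$, and symmetrically $\beta \in K$. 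Assembling these gives $k(\zeta,\sqrt[3]{2},\alpha,\beta)\subseteq K$.

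The main obstacle — and the part that genuinely needs the computer — is identifying, among the $240$ curves, explicit representatives whose coefficients generate each of the four quantities $\zeta$, $\sqrt[3]{2}$, $\alpha$, $\beta$ over $k$; in the write-up this amounts to recording enough of the {\tt Magma} output to pin down the field generated by all the coefficients. One subtlety to watch: $\alpha$ and $\beta$ are only defined up to sixth roots of unity, but since $\zeta \in K$ already, the ambiguity is harmless — any choice of sixth roots generates the same extension once $\zeta$ is adjoined. A second point requiring a brief remark is that the characteristic-$p$ case is handled uniformly by the specialization argument already in the text: the field generated by the coefficients of the $240$ curves on $\calX_{F(k)}$ is $F(k)$-isomorphic image of $\Q(\zeta,\sqrt[3]{2})$, and specializing preserves the curves and their distinctness, so the same four generators appear. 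Thus the statement holds over any perfect $k$ with $\Char k \neq 2,3$.
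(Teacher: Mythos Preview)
Your overall strategy matches the paper's exactly: prove $L\subseteq K$ from the prior discussion, then prove $K\subseteq L$ by exhibiting explicit exceptional curves whose coefficients generate $\zeta,\sqrt[3]{2},\alpha,\beta$ over $k$. The paper carries this out with four concrete curves and a short chain of quotients of their coefficients.

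However, your execution of the reverse inclusion has a genuine gap. The curve you propose, $\Gamma=V(z-\lambda\alpha^2x^2,\,w-\mu\alpha^3x^3)$, is \emph{not} an exceptional curve on $X$: substituting into $w^2=z^3+Ax^6+By^6$ forces $By^6=0$, impossible since $B\in k^*$. The actual ``simple'' curves are of the form $V(z+\alpha^2x^2,\,w-\beta^3y^3)$, whose coefficients give only $\alpha^2$ and $\beta^3$, not $\alpha$ or $\beta$. Your remark that ``with a sixth-root twist, $\alpha$ itself'' appears is not justified: knowing $\alpha^2$, $\alpha^3$ would suffice (then $\alpha=\alpha^3/\alpha^2$), but you have not produced a curve giving $\alpha^3$, and knowing $\alpha^2$ and $\zeta$ alone does not yield $\alpha$.

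The paper sidesteps this by using curves with mixed $xy$ terms. For instance, the coefficients of $z$ in the four curves
\[
V(z-s\alpha\beta xy,\ldots),\quad V(z+s\zeta\alpha\beta xy,\ldots),\quad V(z+\alpha^2x^2-s^2\zeta\beta^2y^2,\ldots),\quad V(z-s^2\zeta\alpha^2x^2+\beta^2y^2,\ldots)
\]
(where $s=\sqrt[3]{2}$) place $s\alpha\beta$, $s\zeta\alpha\beta$, $s(\zeta+1)\alpha\beta^2$, $s(\zeta+1)\alpha^2\beta$ in $L$; taking successive quotients then isolates $\zeta$, $\beta$, $\alpha$, and finally $s$. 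So your plan is sound, but the specific curves must be chosen more carefully than you indicate, and the extraction of generators is a short but non-trivial algebra step rather than a direct read-off.
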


\begin{proof}
Let $L$ denote the splitting field of $X$. The above discussion shows that $L\subseteq K$. Let $s = \sqrt[3]{2}$. By Theorem~\ref{T:lines}, the subschemes of $\PP_{k}(1,1,2,3)$ given by
\begin{align*}
V&(z - s\alpha\beta xy, w - \alpha^3 x^3 - \beta^3 y^3), \\
V&(z + s\zeta\alpha\beta xy, w - \alpha^3 x^3 - \beta^3 y^3), \\
V&(z + \alpha^2x^2 - s^2\zeta\beta^2y^2, w - s(\zeta + 1)\alpha^2\beta x^2y + (2\zeta - 1)\beta^3 y^3)\text{ and} \\
V&(z - s^2\zeta\alpha^2 x^2 + \beta^2 y^2, w - (2\zeta - 1)\alpha^3x^3 + s(\zeta + 1)\alpha\beta^2xy^2)
\end{align*}
 are exceptional curves on $X$. By definition of $L$, we find that
 \[
S :=  \{s\alpha\beta, s\zeta\alpha\beta, s(\zeta + 1)\alpha\beta^2, s(\zeta + 1)\alpha^2\beta\}\subseteq L.
\]
Taking the quotient of the second element of $S$ by the first shows that $\zeta \in L$. We also have $s(\zeta + 1)\alpha\beta \in L$, which shows $s(\zeta + 1)\alpha\beta^2/s(\zeta + 1)\alpha\beta = \beta \in L$. Similarly $s(\zeta + 1)\alpha^2\beta/s(\zeta + 1)\alpha\beta = \alpha \in L$. Finally, we deduce that $s\in L$. This shows $K \subseteq L$.
\end{proof}

To end our discussion on exceptional curves on diagonal surfaces, we give generators for $\Pic X_\kbar$ in terms of these curves. Consider the following exceptional curves on $X$:
{\scriptsize
     \begin{align*} \Gamma_{1} = V(z &+ \alpha^2x^2, w -     \beta^3y^3 ), \\
             \Gamma_{2} = V(z &- (-\zeta + 1)\alpha^2x^2, w + \beta^3y^3), \\
        \Gamma_{3} = V(z &-
        \zeta\alpha^2x^2 + s^2\beta^2y^2,  w -
        (s\zeta - 2s)\alpha^2\beta x^2y - (-2\zeta + 1)\beta^3y^3), \\
      \Gamma_{4} = V(z &+
        2\zeta\alpha^2x^2 - (2s\zeta - s)\alpha\beta xy - (-s^2\zeta + s^2)\beta^2y^2, \\ w &-
        3\alpha^3x^3 - (-2s\zeta - 2s)\alpha^2\beta x^2y - 3s^2\zeta\alpha\beta^2 xy^2 - (-2\zeta + 1)\beta^3y^3), \\
         \Gamma_{5} =  V(z &+
        2\zeta\alpha^2x^2 - (s\zeta - 2s)\alpha\beta xy - s^2\zeta\beta^2y^2 \\ w &+
        3\alpha^3x^3 - (4s\zeta - 2s)\alpha^2\beta x^2y - 3s^2\alpha\beta^2xy^2 - (-2\zeta + 1)\beta^3y^3), \\
                \Gamma_{6} = V(z &-
        (-s^2\zeta + s^2 - 2s + 2\zeta)\alpha^2x^2 - (2s^2\zeta - 2s^2 + 3s - 4\zeta)\alpha\beta xy - (-s^2\zeta + s^2 - 
            2s + 2\zeta)\beta^2y^2, \\ w &-
        (2s^2\zeta - 4s^2 + 2s\zeta + 2s - 6\zeta + 3)\alpha^3x^3 - (-5s^2\zeta + 10s^2 - 6s\zeta - 6s + 
            16\zeta - 8)\alpha^2\beta x^2y \\ &- (5s^2\zeta - 10s^2 + 6s\zeta + 6s - 16\zeta + 8)\alpha\beta^2 xy^2 - (-2s^2\zeta +
            4s^2 - 2s\zeta - 2s + 6\zeta - 3)\beta^3y^3), \\
       \Gamma_{7} = V(z &-
        (-s^2 - 2s\zeta + 2s + 2\zeta)\alpha^2x^2 - (-2s^2\zeta + 3s + 4\zeta - 4)\alpha\beta xy - (-s^2\zeta + s^2 + 2s\zeta - 2)\beta^2y^2, \\ w &-
        (2s^2\zeta + 2s^2 + 2s\zeta - 4s - 6\zeta + 3)\alpha^3x^3 - (10s^2\zeta - 5s^2 - 6s\zeta - 6s - 8\zeta
            + 16)\alpha^2\beta x^2y \\ &- (5s^2\zeta - 10s^2 - 12s\zeta + 6s + 8\zeta + 8)\alpha\beta xy^2 - (-2s^2\zeta - 
            2s^2 - 2s\zeta + 4s + 6\zeta - 3)\beta^3y^3), \\
           \Gamma_{8} = V(z &-
        (s^2\zeta + 2s\zeta + 2\zeta)\alpha^2x^2 - (2s^2 + 3s + 4)\alpha\beta xy - (-s^2\zeta + s^2 - 2s\zeta + 2s 
         - 2\zeta + 2)\beta^2y^2, \\ 
        w &- (-4s^2\zeta + 2s^2 - 4s\zeta + 2s - 6\zeta + 3)\alpha ^3x^3 - (-5s^2\zeta - 5s^2 - 6s\zeta 
- 6s - 8\zeta - 8)\alpha^2\beta x^2y \\ 
&- (5s^2\zeta - 10s^2 + 6s\zeta - 12s + 8\zeta - 16)\alpha\beta^2xy^2 - (4s^2\zeta - 2s^2 + 4s\zeta - 
2s + 6\zeta - 3)\beta^3y^3).
    \end{align*}}
A calculation shows that the above exceptional curves are all skew, that is, $(\Gamma_i,\Gamma_j)_X = 0$ for $i\neq j$. We will also need the exceptional curve
\[ 
\Gamma_{9} =  V(z - st\alpha\beta xy, w - \alpha^3x^3 + \beta^3y^3).
\]
The curve $\Gamma_9$ intersects $\Gamma_{1}$ and $\Gamma_{2}$ at exactly one point and is 
skew to all the other $\Gamma_i$.
\begin{proposition}
\label{P:generators}
Let $X$ be the del Pezzo surface over $k$ defined by 
\[
w^2 = z^3 + Ax^6 + By^6,
\]
in $\PP_k(1,1,2,3)$. Then $\Pic X_\kbar = \Pic X_K$ is the free abelian group with the classes of $\Gamma_i$ for $1\leq i \leq 8$ and $\Gamma_9 + \Gamma_1 + \Gamma_2$ as a basis.
\end{proposition}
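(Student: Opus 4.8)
The approach is to use that the intersection form on $\Pic\Xbar$ is \emph{unimodular}, so that a $9$-tuple of divisor classes is a $\Z$-basis precisely when the determinant of its Gram matrix (the matrix of intersection numbers) equals $\pm 1$. So the whole proposition reduces to one $9\times 9$ determinant, which the skewness and incidence data already recorded just before the statement make trivial to evaluate.

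First I would recall the structure of $\Pic\Xbar$. By \S\ref{S:blowup}, $\Pic\Xbar$ is free of rank $9$ and admits a basis $\{e_1,\dots,e_8,\ell\}$ with $(e_i,e_j)=-\delta_{ij}$, $(e_i,\ell)=0$, $(\ell,\ell)=1$; hence the Gram matrix of this basis is $\operatorname{diag}(-1,\dots,-1,1)$, of determinant $1$. Moreover $\Pic X_K=\Pic\Xbar$: the exceptional curves generate $\Pic\Xbar$ (\S\ref{S:blowup}) and are all defined over $K$ by Proposition~\ref{P:Fieldofdefn}, while $\Pic X_K\hookrightarrow\Pic\Xbar$ since $X$ has a $K$-point (e.g.\ the anticanonical point). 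So the nine classes in question genuinely live in the lattice $\Pic X_K$.

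Next, set $v_i:=[\Gamma_i]$ for $1\le i\le 8$ and $v_9:=[\Gamma_9]+[\Gamma_1]+[\Gamma_2]$. Using that the $\Gamma_i$ ($1\le i\le 8$) are pairwise skew exceptional curves (so $(\Gamma_i,\Gamma_j)=-\delta_{ij}$) and that $\Gamma_9$ is an exceptional curve with $(\Gamma_9,\Gamma_9)=-1$, $(\Gamma_9,\Gamma_1)=(\Gamma_9,\Gamma_2)=1$, and $(\Gamma_9,\Gamma_i)=0$ for $3\le i\le 8$, one computes $(v_i,v_j)=-\delta_{ij}$ for $i,j\le 8$; $(v_i,v_9)=(\Gamma_i,\Gamma_9)+(\Gamma_i,\Gamma_1)+(\Gamma_i,\Gamma_2)=0$ for every $i\le 8$; and $(v_9,v_9)=(-1)+(-1)+(-1)+2+2+0=1$. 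Thus the Gram matrix of $(v_1,\dots,v_9)$ is again $\operatorname{diag}(-1,\dots,-1,1)$, of determinant $1$. Writing $M\in\M_9(\Z)$ for the matrix expressing the $v_i$ in the basis $\{e_1,\dots,e_8,\ell\}$, we get $1=(\det M)^2\cdot 1$, so $\det M=\pm 1$, i.e.\ $M\in\GL_9(\Z)$; hence $(v_1,\dots,v_9)$ is a $\Z$-basis of $\Pic X_K=\Pic\Xbar$, which is the assertion.

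\textbf{Main obstacle.} There is essentially no obstacle in the linear algebra; the only real content is the geometric input, namely that $\Gamma_1,\dots,\Gamma_8$ are pairwise skew and that $\Gamma_9$ meets $\Gamma_1$ and $\Gamma_2$ transversally in a single point each while being skew to $\Gamma_3,\dots,\Gamma_8$. These are exactly the ``a calculation shows'' facts established in the paragraphs preceding the proposition (via Theorem~\ref{T:lines} and an explicit Gr\"obner-basis computation), so I would either invoke those computations directly or re-verify them; once they are in hand, the proof is the determinant argument above.
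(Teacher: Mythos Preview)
Your proof is correct and is exactly the argument the paper gives, just spelled out in more detail: the paper simply says the nine classes ``generate a \emph{unimodular} sublattice of $\Pic X_K$ of rank $9$, hence they span the whole lattice,'' which is precisely your Gram-matrix/determinant computation. The explicit verification that the Gram matrix of $(v_1,\dots,v_9)$ is $\operatorname{diag}(-1,\dots,-1,1)$ is a nice touch and makes the unimodularity claim transparent.
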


\begin{proof}
By Proposition~\ref{P:Fieldofdefn} we know $K$ is the splitting field of $X$.  The classes of $\Gamma_i$ for $1\leq i \leq 8$ and $\Gamma_9 + \Gamma_1 + \Gamma_2$ generate a \emph{unimodular} sublattice of $\Pic X_K$ of rank $9$. Hence they span the whole lattice.
\end{proof}

%****************************************************************************
\section{Galois action on $\Pic X_K$}
\label{S:galois}

Suppose $\sqrt[3]{2}, \zeta \notin k$ and let $X$ be a generic surface of  the form~(\ref{E:diagonalsurfaces}).  Let $K = k(\zeta,\sqrt[3]{2},\alpha,\beta)$, as above. The action of $\Gal(\kbar/k)$ on $\Pic X_\kbar$ factors through the finite quotient $\Gal(K/k)$, which acts on the coefficients of the equations defining generators of $\Pic X_K$ (cf.~\S\ref{S:GaloisAction}). The group $\Gal(K/k)$ has $4$ generators, which we will denote $\sigma, \tau, \iota_A, \iota_B$, whose action on the elements $\zeta, \sqrt[3]{2}, \alpha$ and $\beta$ is recorded in Table~\ref{Ta:Galois}. If $\sqrt[3]{2} \in k$ (resp. $\zeta \in k$), then we do not need the generator $\sigma$ (resp. $\tau$).

\begin{table}
\begin{center}
\begin{tabular}{|c|cccc|}
\hline
& $\sigma$ & $\tau$ & $\iota_A$ & $\iota_B$ \\
\hline
$\sqrt[3]{2}$ & $-\zeta\sqrt[3]{2}$ & $\sqrt[3]{2}$ & $\sqrt[3]{2}$ & $\sqrt[3]{2}$ \\
$\zeta$ & $\zeta$ & $\zeta^{-1}$ & $\zeta$ & $\zeta$ \\
$\alpha$ & $\alpha$ & $\alpha$ & $\zeta\alpha$ & $\alpha$ \\
$\beta$ & $\beta$ & $\beta $& $\beta$ & $\zeta\beta$ \\ 
\hline
\end{tabular}
\end{center}
\caption{Action of the generators of $\Gal(K/k)$, assuming $\sqrt[3]{2},\zeta \notin k$.}
\label{Ta:Galois}
\end{table}

Using the basis for $\Pic X_K$ of Proposition~\ref{P:generators} we can write $\sigma,\tau,\iota_A$ and $\iota_B$ as $9\times 9$ matrices with integer entries. This $9$-dimensional faithful representation is useful because the action of $\Gal(K/k)$ on $\Pic X_K$ becomes right matrix multiplication on the space of row vectors $\Z^9$. 

\begin{proof}[Proof of Theorem~\ref{T:possibleH1s}]
Assume first that $\sqrt[3]{2}, \zeta\notin k$. Then $G_0 := \langle \sigma,\tau,\iota_A,\iota_B \rangle \subseteq GL_9(\Z)$ is isomorphic to the generic image of $\Gal(\kbar/k)$ in $\Aut(\Pic X_\kbar)$ for a diagonal del Pezzo surface of degree 1. For a particular surface, a choice of sixth roots $\alpha$ and $\beta$ of $A$ and $B$, respectively, and a sixth root of unity $\zeta$ gives a realization of $G := \Gal(K/k)$ as a subgroup of $G_0$, where $K = k(\zeta,\sqrt[3]{2},\alpha,\beta)$. 

We turn this idea around by focusing on the subgroup lattice of $G_0$. We use {\tt Magma} to compute the first group cohomology (with coefficients in $\Pic X_K$) of subgroups in this lattice. We note there is no need to compute this cohomology group for every subgroup in the lattice. For example, any two subgroups of $G_0$ conjugate in $W(E_8)$ give rise to isomorphic cohomology groups. There are $448$ conjugacy classes of subgroups of $G_0$ in $W(E_8)$. 

We also note that  in order for a subgroup $G \subseteq G_0$ to correspond to at least one diagonal del Pezzo surface of degree 1, it is necessary that the natural map $G\to G_0/\langle \iota_A,\iota_B \rangle$ be surjective because $k(\zeta,\sqrt[3]{2}) \subseteq K$. This cuts the number of conjugacy classes for which we need to compute group cohomology to $242$. 

Fix a subgroup $G \subseteq G_0$. For each exceptional curve $\Gamma$ (given as a row vector in $\Z^9,$ using Proposition~\ref{P:generators}) we may compute the orbit of $\Gamma$ under the action of $G$. If there is a $G$-stable set of \emph{skew} exceptional curves, then any surface $X$ that has $G$ for its image of $\Gal(\kbar/k)$ in $\Aut(\Pic X_\kbar)$ is not minimal. Hence, we discard any such $G$.  This way we get rid of $58$ conjugacy classes of subgroups of $G_0$ and guarantee that surfaces we deal with in the rest of the paper are minimal.

The above reductions cut the number of candidate groups for $G$ to $184$. The results of our computations are summarized in Table~\ref{Ta:H1s}. For each abstract group $\Br X/\Br k$ we list the number $C(G)$ of conjugacy classes of subgroups of $G_0$ that give the listed cohomology group. We also give an example of a subgroup $G\subseteq G_0$ that has the given cohomology group, and a pair of elements $A,B \in k^*$ such that the surface $X$ of the form~(\ref{E:diagonalsurfaces}) realizes $G$ as a Galois group acting on $\Pic X_\kbar$. This shows all the possible cohomology groups \emph{do} occur.

If $\sqrt[3]{2} \in k$ yet $\zeta \notin k$ then we may repeat the above process starting with $G_0 = \langle \tau,\iota_A,\iota_B\rangle$. If $\zeta \in k$ yet $\sqrt[3]{2} \notin k$ then we use $G_0 = \langle \sigma,\iota_A,\iota_B\rangle$. Finally, if $\zeta, \sqrt[3]{2} \in k$ then we use $G_0 = \langle \iota_A,\iota_B\rangle$. The results in these three cases are summarized in Table~\ref{Ta:H1s}.
\end{proof}

\begin{table}
\begin{center}
\begin{tabular}{|c||c|c|c|c|l|}
\hline
& $\Br X/\Br k$ & $C(G)$ & Example of $G$ & $A, B$
& \multicolumn{1}{c|}{Restrictions}
\\
\hline\hline
$\sqrt[3]{2} \notin k$,\Strut
& $\{ 1\}$ & $65$ & $\genr{\sigma\iota_B^4, \tau,\iota_A^2}$ & $a^2c^6,\pm 4d^6$
& $a \notin \genr{2, k^{*3}}$
\\
$\zeta \notin k$
& $\Z/2\Z$  & $18$ & $\genr{\sigma\iota_A,\tau, \iota_B^3}$ & $4a^3c^6,b^3d^6$
& $a,b \notin \genr{2,-3, k^{*2}}$
\\
& $(\Z/2\Z)^2$  & $9$ & $\genr{\sigma,\tau, \iota_A^3\iota_B^3}$ & $a^3c^6,a^3d^6$
& $a \notin \genr{2,-3, k^{*2}}$
\\
& $(\Z/2\Z)^3$ & $4$ & $\genr{\sigma\iota_A^2, \tau, \iota_A^3\iota_B^3}$ & $16a^3c^6,a^3d^6$
& $a \notin \genr{2,-3, k^{*2}}$
\\
& $\Z/3\Z$ & $56$& $\genr{\sigma\iota_A\iota_B^2,\iota_A^3,\tau}$ & $4a^3c^6, \pm 16d^6$
& $a \notin \genr{-3,k^{*2}}$
\\ 
& $(\Z/3\Z)^2$ & $26$& $\genr{\tau, \sigma\iota_A^2\iota_B^2}$ & $ac^6,ad^6$
& $a \in \pm 16k^{*6}$
\\ 
& $\Z/6\Z$ & $6$& $\genr{\sigma\iota_A, \iota_A^3, \tau\iota_B}$
& $4a^3c^6,- 3d^6$
& $a \notin \genr{3, k^{*2}}$
\\ 
\hline
$\sqrt[3]{2}\in k$,\Strut
& $\{ 1\} $  & $11$ & $\genr{\tau,\iota_A\iota_B}$ & $ac^6,ad^6$
& $a \notin \genr{3,k^{*2},k^{*3}}$
\\
$\zeta \notin k$ & $\Z/2\Z$  & $7$ & $\genr{\tau,\iota_A\iota_B^3}$ & $ac^6,a^3d^6$
& $a \notin \genr{3,k^{*2},k^{*3}}$
\\
& $(\Z/2\Z)^2$  & $2$ & $\genr{\tau,\iota_A\iota_B^{5}}$ & $ac^6,a^5d^6$
& $a \notin \genr{3,k^{*2},k^{*3}}$
\\
& $(\Z/2\Z)^3$  & $1$ & $\genr{\tau,\iota_A^3,\iota_B^3}$ & $a^3c^6,b^3d^6$
& $a,b \notin \genr{-3,k^{*2}};\ a \neq b$
\\
& $(\Z/2\Z)^4$  & $2$ & $\genr{\tau,\iota_A^3\iota_B^3}$ & $a^3c^6,a^3d^6$
& $a \notin \genr{-3,k^{*2}}$
\\
& $\Z/3\Z$  & $8$& $\genr{\tau,\iota_A^2\iota_B^{5}}$ & $a^2c^6,a^5d^6$
& $a \notin \genr{3,k^{*2},k^{*3}}$
\\ 
& $(\Z/3\Z)^2$ & $5$& $\genr{\tau, \iota_A^2\iota_B^2}$ & $a^2c^6,a^2d^6$
& $a \notin \genr{3,k^{*3}}$
\\
& $\Z/6\Z$ & $4$& $\genr{\tau,\iota_A}$ & $ac^6,d^6$
& $a \notin \genr{3,k^{*2},k^{*3}}$
\\ 
\hline
$\sqrt[3]{2} \notin k$,\Strut
& $\{ 1\} $  & $26$ & $\genr{\sigma\iota_A^2\iota_B^2, \iota_A^3,\iota_B^3}$ & $16a^3c^6,16b^3d^6$
& $a,b \notin \genr{2, k^{*2}}; a \neq b$
\\
$\zeta\in k$
& $(\Z/2\Z)^2$ & $10$ & $\genr{\sigma\iota_B^{4}, \iota_A^3\iota_B^3}$ & $a^3c^6,4a^3d^6$
& $a \notin \genr{2, k^{*2}}$
\\
& $(\Z/2\Z)^4$  & $6$ & $\genr{\sigma, \iota_A^3\iota_B^3}$ & $a^3c^6,a^3d^6$
& $a \notin \genr{2, k^{*2}}$
\\
& $(\Z/2\Z)^6$  & $2$ & $\genr{\sigma\iota_B^2, \iota_A^3\iota_B^3}$ & $a^3c^6,16a^3d^6$
& $a \notin \genr{2, k^{*2}}$
\\
& $\Z/3\Z$  & $16$ & $\genr{\sigma\iota_A^2, \iota_A^{5}\iota_B^2}$ & $16a^5c^6,a^2d^6$
& $a \notin \genr{2, k^{*2}, k^{*3}}$
\\
& $(\Z/3\Z)^2$  & $16$& $\genr{\sigma\iota_A\iota_B^2}$ & $4a^3c^6,16d^6$
& $a \notin \genr{2, k^{*2}}$
\\ 
& $(\Z/3\Z)^3$ & $4$& $\genr{\sigma\iota_B^2, \iota_A^2\iota_B^2}$ & $a^2c^6,16a^2d^6$
& $a \notin \genr{2, k^{*3}}$
\\ 
& $(\Z/3\Z)^4$ & $3$& $\genr{\sigma\iota_A^2\iota_B^2}$ & $16c^6,16d^6$
& ---
\\ 
& $\Z/2\Z\times \Z/6\Z$ & $2$& $\genr{\sigma, \iota_A}$ & $a,d^6$
& $a \notin \genr{2, k^{*2}, k^{*3}}$
\\ 
& $(\Z/6\Z)^2$ & $2$& $\genr{\sigma\iota_B}$ & $c^6,4b^3d^6$
& $b \notin \genr{2, k^{*3}}$
\\ 
\hline
$\sqrt[3]{2} \in k$,\Strut
& $\{ 1\} $  & $5$ & $\genr{\iota_A\iota_B}$ & $ac^6,ad^6$
& $a \notin \genr{k^{*2},k^{*3}}$
\\
$\zeta \in k$
& $(\Z/2\Z)^2$  & $5$ & $\genr{\iota_A^3\iota_B}$ & $a^3c^6,ad^6$
& $a \notin \genr{k^{*2},k^{*3}}$
\\
& $(\Z/2\Z)^4$  & $1$ & $\genr{\iota_A\iota_B^{5}}$ & $ac^6,a^5d^6$
& $a \notin \genr{k^{*2},k^{*3}}$
\\
& $(\Z/2\Z)^6$  & $1$ & $\genr{\iota_A^3, \iota_B^3}$ & $a^3b^6,b^3d^6$
& $a,b \notin k^{*2};\ a \neq b$
\\
& $(\Z/2\Z)^8$  & $1$ & $\genr{\iota_A^3\iota_B^3}$ & $a^3c^6,a^3d^6$
& $a \notin k^{*2}$
\\
& $\Z/3\Z$  & $2$ & $\genr{\iota_A, \iota_B^2}$ & $ac^6,b^2d^6$
& $a \notin \genr{k^{*2},k^{*3}};\ b\notin k^{*3}$
\\
& $(\Z/3\Z)^2$  & $3$& $\genr{\iota_A^{5}\iota_B^2}$ & $a^5c^6,a^2d^6$
& $a \notin \genr{k^{*2},k^{*3}}$
\\ 
& $(\Z/3\Z)^4$ & $1$& $\genr{\iota_A^2\iota_B^2}$ & $a^2c^6,a^2d^6$
& $a \notin k^{*3}$
\\ 
& $(\Z/6\Z)^2$ & $2$& $\genr{\iota_B}$ & $c^6,bd^6$
& $a \notin \genr{k^{*2},k^{*3}}$
\\
\hline
\end{tabular}
\end{center}
\medskip
\caption{Possible groups $H^1(G,\Pic X)$. See the proof of Theorem~\ref{T:possibleH1s} for an explanation.}
\label{Ta:H1s}
\end{table}

Looking through our computations we observe that
\[
H^1(G_0,\Pic X_K) = 0,
\]
regardless of whether the elements $\sqrt[3]{2}$ and $\zeta$ belong to $k$ or not. This means that \emph{generically there is no Brauer--Manin obstruction to weak approximation} on diagonal del Pezzo surfaces of degree $1$ over a number field. 

\begin{remark}
In \cite[Theorem 4.1]{Corn2} Corn determines the possible groups 
\[
\Br X/\Br k \cong H^1(\Gal(\kbar/k),\Pic X_\kbar)
\] 
for all del Pezzo surfaces $X$ over a number field $k$. In particular, Corn shows the only primes that divide the order of this group are $2$, $3$ and $5$, and the latter can only occur when $X$ is of degree $1$. Unfortunately, \emph{diagonal} surfaces of degree $1$ cannot be used to give examples of $5$-torsion in $\Br X/\Br k$. This follows either from Theorem~\ref{T:possibleH1s} or, more easily, from the isomorphism~(\ref{E:inflation}): the group $H^1(\Gal(K/k),\Pic X_K)$ is annihilated by $[K:k]$, which divides $216$, by Proposition~\ref{P:Fieldofdefn}.
\end{remark}

%****************************************************************************
\section{Counterexamples to Weak Approximation}
\label{S:counterexamples}

\subsection{A warm-up example} We begin with an example over $k = \Q(\zeta)$ for which we do not need to use the descent procedure described in the Appendix, and for $\Gal(K/k)$ is small. The presence of an obstruction to weak approximation on it cannot be explained by a conic bundle structure (see Remark~\ref{R:conic bundle structure}).

\begin{proposition}
\label{T:warmup}
Let $X$ be the del Pezzo surface of degree $1$ over $k = \Q(\zeta)$ given by
\[
w^2 = z^3 + 16x^6 + 16y^6
\] 
in $\PP_k(1,1,2,3)$.  Then $X$ is $k$-minimal and there is a Brauer-Manin obstruction to weak approximation on $X$. Moreover, the obstruction arises from a cyclic algebra class in $\Br X/\Br k$.
\end{proposition}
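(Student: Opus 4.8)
The plan is to place $X$ in the framework of Sections~\ref{S:cyclic}--\ref{S:galois} and then carry out an explicit local analysis at two primes. First I would pin down the field data: since $16=2^4$, a sixth root of $16$ is $(\sqrt[3]{2})^2$, so by Proposition~\ref{P:Fieldofdefn} the splitting field of $X$ is $K=k(\sqrt[3]{2})=\Q(\zeta,\sqrt[3]{2})$, a cyclic cubic extension of $k=\Q(\zeta)$ (the ground field contains a primitive cube root of unity, so $K/k$ is Kummer). Writing $G:=\Gal(K/k)\cong\Z/3\Z$, I would track the Galois action on $\sqrt[3]{2},\alpha,\beta,\zeta$ to identify $X$ with the subgroup $\langle\sigma\iota_A^2\iota_B^2\rangle$ of Table~\ref{Ta:H1s}: taking $\alpha=\beta=(\sqrt[3]{2})^2$, a generator of $G$ sends $\sqrt[3]{2}\mapsto-\zeta\sqrt[3]{2}$, $\alpha\mapsto\zeta^2\alpha$ and $\beta\mapsto\zeta^2\beta$, which is exactly $\sigma\iota_A^2\iota_B^2$ in the notation of Table~\ref{Ta:Galois}. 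Hence, by~(\ref{E:HSisom}), $\Br X/\Br k\cong H^1(G,\Pic X_K)\cong(\Z/3\Z)^4$ (the $(\Z/3\Z)^4$ entry of Table~\ref{Ta:H1s} with $c=d=1$), and since this $G$ survives the minimality filter applied in the proof of Theorem~\ref{T:possibleH1s} (there is no $G$-stable set of pairwise skew exceptional curves), the surface $X$ is $k$-minimal.

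Next I would observe that here \emph{every} class of $\Br X/\Br k$ is cyclic, so the last assertion of the proposition follows as soon as one class obstructs. Because $G$ is cyclic I apply Section~\ref{S:strategyondPs} with $H=\{1\}$ and $L=K$: the inflation part of~(\ref{E:crucialinjection}) is then the identity, so~(\ref{E:crucialinjection}) is an isomorphism $H^1(G,\Pic X_K)\isomto\Br X/\Br k$, and Theorem~\ref{T:cyclicalgebras} gives a group isomorphism $\psi\colon\ker\N/\im\Delta\isomto\Br_{\cyc}(X,K)$ with $\ker\N/\im\Delta\cong H^1(G,\Pic X_K)$. In particular no appeal to the Appendix is needed, and each nonzero class of $\Br X/\Br k$ has the form $[(K/k,f)]$ with $f\in k(X)^*$ and $\divv f=N_{K/k}(D)$ for some $D\in\Div X_K$.

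The substantive work is then to choose a good class and evaluate it. Using the explicit exceptional curves of Proposition~\ref{P:generators} and the $9\times 9$ matrices representing $\sigma,\iota_A,\iota_B$, I would locate a divisor class $[D]$ generating one $\Z/3\Z$ summand of $\ker\N/\im\Delta$, lift it to an honest divisor supported on a $G$-orbit of exceptional curves, and compute $N_{K/k}(D)=\divv f$ for an explicit weighted-homogeneous rational function $f$ on $X$ (a ratio of products of forms $z-Q(x,y)$ and $w-C(x,y)$ cutting out those curves). Put $\sA:=(K/k,f)\in\Br X$. I would then determine, at each place $v$ of $k$, the image of the local evaluation map $x_v\mapsto\inv_v\sA(x_v)$. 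Since $X$ has good reduction away from the primes $\mathfrak p_2,\mathfrak p_3$ of $k$ lying over $2$ and $3$, and $\sA$ extends to an Azumaya algebra on the corresponding smooth proper $\calO_k[1/6]$-model, this evaluation map is identically $0$ at every finite place $v\nmid 6$ (where $\Br\calO_v=0$) and at the archimedean place (where $\Br k_v=0$); so only $\mathfrak p_2$ and $\mathfrak p_3$ can contribute. At $v\in\{\mathfrak p_2,\mathfrak p_3\}$ I would produce $k_v$-points of $X$ by Hensel's lemma and compute $\inv_v(K_w/k_v,f(x_v))$ from the norm-residue symbol of the ramified cyclic cubic $K_w/k_v$, aiming to exhibit two $k_v$-points at which $f$ lands in distinct cosets of $N_{K_w/k_v}K_w^*$, so that $\inv_v\sA$ is non-constant there. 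Finally, since $X$ carries the $k$-rational anticanonical point $[0:0:1:1]$ (so $X(\Adeles_k)\neq\emptyset$, with $X(k_v)\neq\emptyset$ for every $v$) and $0$ lies in the image of each local evaluation map, I would assemble an adelic point $(x_v)$ with $\sum_v\inv_v\sA(x_v)\neq 0$; this is a Brauer--Manin obstruction to weak approximation, and it comes from the cyclic class $[\sA]=\psi([D])$ in $\Br X/\Br k$.

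The real obstacle, I expect, is this last step: making $f$ explicit enough to evaluate and --- above all --- choosing $[D]$ so that the resulting $\sA$ is genuinely non-split at $\mathfrak p_2$ or $\mathfrak p_3$ rather than secretly lying in $\Br k$, since many of the $(\Z/3\Z)^4$ worth of cyclic classes have identically zero local invariants. Once a suitable $f$ is in hand, the norm-residue computations at the two ramified primes are routine but delicate, especially at $\mathfrak p_3$, where the ramification of $K/k$ is wild.
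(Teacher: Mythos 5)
Your setup is exactly the paper's: same splitting field $K=k(\sqrt[3]{2})$, same identification of $\Gal(K/k)$ with $\langle\sigma\iota_A^2\iota_B^2\rangle$, same computation $\Br X/\Br k\cong(\Z/3\Z)^4$ from Table~\ref{Ta:H1s}, same minimality argument, and the same use of Theorem~\ref{T:cyclicalgebras} with $H=\{1\}$, $L=K$ (correctly noting that no descent from the Appendix is needed here). However, what you have written is a plan rather than a proof, and the step you yourself flag as ``the real obstacle'' is precisely the content of the proposition: nothing in your argument establishes that \emph{some} class in $\Br_{\cyc}(X,K)$ actually has a non-constant local evaluation map. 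A priori all sixteen classes could have identically zero invariants at every place (the group $H^1$ being nonzero does not by itself obstruct anything), so until a specific class, a specific $f$, and specific local points with distinct invariants are exhibited, the existence of the obstruction is unproved. The paper closes this gap concretely: it takes the class $[\Gamma_2-\Gamma_5]$, produces $f=(w+4y^3)/(w+(2\zeta+2)zy+(-8\zeta+4)y^3+12x^3)$ via a Gr\"obner basis for the ideal of the $\rho$-orbit of $\Gamma_5$, and evaluates at the two \emph{global} rational points $P_1=[1:0:0:4]$ and $P_2=[0:1:0:4]$, finding $\sA(P_1)=(K/k,1/4)$ and $\sA(P_2)=(K/k,\zeta)$ with invariants at the unique prime $\pp$ above $3$ equal to $0$ and $1/3$ respectively.

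One further simplification you are missing: because $P_1$ and $P_2$ are $k$-rational, reciprocity gives $\sum_v\inv_v\sA(P_1)=0$, so the adelic point that agrees with $P_1$ away from $\pp$ and with $P_2$ at $\pp$ has invariant sum $\inv_\pp\sA(P_2)-\inv_\pp\sA(P_1)=1/3\neq 0$. This sidesteps entirely your place-by-place analysis (good reduction away from $6$, vanishing of $\Br\calO_v$, extension of $\sA$ to an integral model, Hensel lifting of local points), which, while salvageable, adds work and some delicate claims you would still have to justify. If you intend to complete the proof along your lines, you must either carry out the explicit Gr\"obner/norm-residue computation or find another way to certify non-triviality of the evaluation map; as it stands the conclusion does not follow.
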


\begin{proof}
Let $\alpha = \beta = \sqrt[3]{4}$. By Proposition~\ref{P:Fieldofdefn}, the exceptional curves of $X$ are defined over $K := k(\sqrt[3]{2})$, and in the notation of \S\ref{S:galois} we have $G := \Gal(K/k) = \langle \rho\rangle$, where $\rho = \sigma\iota_A^2\iota_B^2$. Since $G$ is cyclic, we may apply the strategy of \S\ref{S:strategyondPs} by taking $H$ to be the trivial subgroup (so $L = K$). Using the basis for $\Pic X_K \cong \Z^9$ of Proposition~\ref{P:generators} we compute
\[
\ker\N/\im\Delta \cong (\Z/3\Z)^4;
\]
see Table~\ref{Ta:H1s}. The classes 
\begin{align*}
\mathfrak{h}_1 = [(0,  1,  0,  0,  0,  0,  0,  2, -1)], &\qquad \mathfrak{h}_2 = [(0,  0,  0,  0,  1,  0,  0,  2, -1)], \\
\mathfrak{h}_3 = [(0,  0,  0,  0,  0,  0,  1,  2, -1)], &\qquad \mathfrak{h}_4 = [(0,  0,  0,  0,  0,  0,  0,  3, -1)]
\end{align*}
of $\Pic X_K$ determine generators for this group.

Consider the divisor class $\mathfrak{h}_1 - \mathfrak{h}_2 = [\Gamma_2 - \Gamma_5] \in \Pic X_K$. By Theorem~\ref{T:cyclicalgebras}, this class gives a cyclic algebra $(K/k,f)$ in the image of the map $\Br X/\Br k \to \Br k(X)/\Br k$, where $f\in k(X)^*$ is any function such that $N_{K/k}(\Gamma_2 - \Gamma_5) = (f)$, that is, a function with zeroes along $\Gamma_2 + {}^{{\rho}}\Gamma_2 + {}^{{\rho}^2}\Gamma_2$ and poles along $\Gamma_5 + {}^{{\rho}}\Gamma_5 + {}^{{\rho}^2}\Gamma_5$.  
Using the explicit equations for $\Gamma_2$ in \S\ref{S:CurvesOnDiagonals} we see that the polynomial $w + 4y^3$ vanishes along $\Gamma_2 + {}^{{\rho}}\Gamma_2 + {}^{{\rho}^2}\Gamma_2$. 

Let $I$ be the ideal of functions that vanish on $\Gamma_5, {}^{{\rho}}\Gamma_5$ and ${}^{{\rho}^2}\Gamma_5$. Explicitly,
\[
I = (z - Q_5,w - C_5) \cap (z - {}^\rho Q_5, w - {}^\rho C_5) \cap (z - {}^{\rho^2}\!Q_5, w - {}^{\rho^2}\!C_5),
\]
where $Q_5$ and $C_5$ are the quadratic and cubic forms, respectively, corresponding to $\Gamma_5$, and, for example, ${}^\rho Q_5$ is the result of applying $\rho$ to the coefficients of $Q_5$. We compute a Gr\"obner basis for $I$ (under the lexicographic order $w>z>y>x$) and find the polynomial $w + (2\zeta + 2)zy + (-8\zeta + 4)y^3 + 12x^3$ in this basis. Hence
\[
f := \frac{w + 4y^3}{w + (2\zeta + 2)zy + (-8\zeta + 4)y^3 + 12x^3}
\]
has the required zeroes and poles.

Consider the following rational points of $X$:
\[
P_{1} = [1:0:0:4] \quad\text{and}\quad P_{2} = [0:1:0:4].
\]
Let $\sA$ be the Azumaya algebra of $X$ corresponding to $(K/k,f)$. Specializing the algebra $\sA$ at $P_{1}$ we obtain the cyclic algebra $\sA(P_1) = (K/k,1/4)$ over $k$. On the other hand, specializing at $P_2$ we compute $\sA(P_2) = (K/k,1/(1 - \zeta)) = (K/k,\zeta)$.

Let $\pp$ be the unique prime above $3$ in $k$. To compute the invariants we observe that
\[
\inv_\pp(\sA(P_i)) = \frac{1}{3}[f(P_i),2]_\pp \in \Q/\Z,
\]
where $[f(P_i)_\pp,2]_\pp \in \Z/3\Z$ is the (additive) norm residue symbol. We compute $[1/4,2]_\pp \equiv 0 \bmod 3$ (using~\cite[(77)]{CTKS87}) and $[\zeta,2]_\pp \equiv 1 \bmod 3$ (using biadditivity of the norm residue symbol and~\cite[(75)]{CTKS87} with $\theta = -\zeta$, $a = 1$). Let $P \in X(\Aff_k)$ be the point that is equal to $P_{1}$ at all places except $\pp$, and is $P_{2}$ at $\pp$. Then 
\[
\sum_{v} \inv_v(\sA(P_v)) = 1/3,
\]
so $P \in X(\Aff_k)\setminus X(\Aff_k)^{\Br}$ and $X$ is a counterexample to weak approximation. 

To see that $X$ is $k$-minimal, see the proof of Theorem~\ref{T:possibleH1s}: the surface $X$ appears as the example in the twelfth line from the bottom of Table~\ref{Ta:H1s}.
\end{proof}

\begin{remark}
\label{R:conic bundle structure}
The surface $X$ of Proposition~\ref{T:warmup} is not birational to a conic bundle $C$, since the birational invariant $\Br X/\Br k$ is isomorphic to $(\Z/3\Z)^4$, while $\Br C/\Br k$ is always $2$-torsion. In particular, the failure of weak approximation cannot be accounted for by the presence of a conic bundle structure.
\end{remark}

%------------------------------------------------
\subsection{Main Theorem}
We are ready to prove our main theorem.

\begin{proof}[Proof of Theorem~\ref{T:maintheorem}]
Let $\alpha = \beta = \sqrt{p}$. By Proposition~\ref{P:Fieldofdefn}, the exceptional curves of $X$ are defined over $K := \Q(\zeta,\sqrt[3]{2},\sqrt{p})$, and in the notation of \S\ref{S:galois} we have $G := \Gal(K/\Q) = \langle \sigma,\tau,\iota_A^3\iota_B^3\rangle$. One easily checks that the element $\rho := \iota_A^3\iota_B^3$ acts on exceptional curves as the Bertini involution of the surface (see \S\ref{SS:Bertini}).

The subgroup $H := \langle\sigma,\tau\rangle$ of $G$ has index $2$; hence it is normal and $G/H$ is cyclic. Thus, we are in the situation described in \S\ref{S:strategyondPs}, that is,
\[
H^1(\Gal(L/\Q),\Pic X_{L}) \hookrightarrow \Br X /\Br \Q,
\]
where $L = K^H$ is $\Q(\sqrt{p})$ in this case. The injection is in fact an isomorphism because $H^1(H,\Pic X_K) = 0$, though we will not use this fact. Using the basis for $\Pic X_K \cong \Z^9$ of Proposition~\ref{P:generators} we compute
\[
\ker\N/\im\Delta \cong \Z/2\Z\times \Z/2\Z.
\]
The classes 
\begin{equation}
\label{E:generatorsForPic}
\mathfrak{h}_1 = [(2,1,1,1,1,0,1,2,-3)]\qquad\text{and}\qquad \mathfrak{h}_2 = [(0,0,0,0,0,1,0,-1,0)]
\end{equation}
of $\Pic X_L$ generate this group.

Next, we apply the procedure of the Appendix to descend the line bundle $\OO_{X_K}(\Gamma_6 - \Gamma_8)$ in the class of $\mathfrak{h}_2$ to a line bundle defined over $\Q(\sqrt{p})$. We must give isomorphisms
\[
f_h: \OO_{X_K}(\Gamma_6 - \Gamma_8) \to \OO_{X_K}(\null^h\Gamma_6 - \null^h\Gamma_8),
\]
one for each $h\in H$, satisfying the cocycle condition.  In this case $H$ is isomorphic to the symmetric group on $3$ elements, with presentation
\[
H = \langle\sigma,\tau \,|\, \sigma^3 = \tau^2 = 1, \sigma\tau = \tau\sigma^2\rangle,
\]
so it is enough to find isomorphisms $f_\sigma$ and $f_\tau$ as above such that
\begin{align*}
\null^{\sigma^2}\!f_\sigma\circ\null^\sigma\!f_\sigma \circ f_\sigma & = id, \\
\null^\tau\!f_\tau\circ f_\tau &= id, \\
\null^\sigma\!f_\tau \circ f_\sigma &= \null^{\tau\sigma}\!f_\sigma\circ\null^\tau\!f_\sigma\circ f_\tau.
\end{align*}
For example, the map $f_\sigma$ is just multiplication by a function having zeroes at $\Gamma_6$ and $\null^\sigma \Gamma_8$ and poles at $\Gamma_8$ and $\null^\sigma\Gamma_6$. We also denote  this function $f_\sigma$, and find it as follows. First, take a function that vanishes on $\Gamma_8$, $\null^\sigma\Gamma_6$, and possibly some extra lines. For example, recall that
\begin{align*}
\Gamma_6 = V(z - Q_6(x,y), w - C_6(x,y)), \quad\text{and}\quad
\Gamma_8 = V(z - Q_8(x,y), w - C_8(x,y)),
\end{align*}
where $Q_6$ and $Q_8$ (resp $C_6$ and $C_8$) are the quadratic (resp.\ cubic) forms in $x$ and $y$, corresponding to $\Gamma_6$ and $\Gamma_8$ given in \S\ref{S:CurvesOnDiagonals}. Let $\null^\sigma\!Q_6$ denote the result of applying $\sigma$ to the coefficients of $Q_6$, and similarly for the other binary and cubic forms. The function
\[
g_1 = (z - \null^\sigma\!Q_6(x,y))(z -  Q_8(x,y))
\]
vanishes on the exceptional curves\footnote{The notation $\null^\sigma\Gamma_6'$ is unambiguous (cf. Remark~\ref{R:unambiguous}).} $\null^\sigma\Gamma_6$, $\null^\sigma\Gamma_6'$, $\Gamma_8$ and $\Gamma_8'$. Let $I$ be the ideal of functions that vanish on $\Gamma_6$, $\null^\sigma\Gamma_8$, $\null^\sigma\Gamma_6'$ and $\Gamma_8'$. Explicitly,
\[
I = (z - Q_6, w - C_6) \cap (z - {}^\sigma\!Q_8, w - {}^\sigma\!C_8) \cap 
(z - {}^\sigma\!Q_6, w + {}^\sigma\!C_6) \cap (z - Q_8, w + C_8).
\]
We compute a Gr\"obner basis for $I$ (under the lexicographic order $w>z>y>x$) and find the following degree $4$ polynomial in the basis:
{\scriptsize
\begin{align*}
f_1 &= 6\sqrt{p}wy + 3\sqrt{p}(\zeta - 1)(s^2 + 2)wx + (-2\zeta
+ 1)sz^2 + 2p(2\zeta - 1)(s^2 + s + 1)zy^2 \\
&\quad + p(-\zeta - 1)(3s^2 + 2s + 2)zyx + 2p(-\zeta + 2)(s^2 + s + 1)zx^2 + 2p^2(2\zeta - 1)(s^2 + 1)y^4 \\
&\quad  + p^2(-\zeta - 1)(3s^2 + 2s +2)y^3x + 2p^2(-\zeta + 2)(s^2 + s + 1)y^2x^2  + 2p^2(2\zeta
- 1)(s + 1)yx^3\\
&\quad  + p^2(\zeta + 1)(s^2 - 2)x^4.
\end{align*}
}
The function $f_1/g_1$ has the right zeroes and poles to be $f_\sigma$. We set
\[
f_\sigma := \frac{1}{(-2\zeta + 1)s}\cdot\frac{f_1}{g_1}.
\]
The constant in front of $f_1/g_1$ is a normalization factor, making $f_\sigma([0:0:1:1]) = 1$. 

Similarly, $f_\tau$ denotes a function with zeroes at $\Gamma_6$ and $\null^\tau\Gamma_8$ and poles at $\Gamma_8$ and $\null^\tau\Gamma_6$. Let
{\scriptsize
\begin{align*}
g_2 &= (z - \null^\tau\!Q_6(x,y))(z - Q_8(x,y)), \\
f_2 &= 6\sqrt{p}wy - \sqrt{p}wx + (-2\zeta + 1)kz^2 + 2p(2\zeta - 1)(k^2 + k +1)zy^2 + 2p(2\zeta - 1)(k + 1)zyx \\
    &\quad + 2p(2\zeta - 1)(k^2 + k + 1)zx^2 + 2p^2(2\zeta -
    1)(k^2 + 1)y^4 + 2p^2(2\zeta - 1)(k + 1)y^3x \\
    &\quad + 2p^2(2\zeta - 1)(k^2 + k + 1)y^2x^2 + 
    2p^2(2\zeta - 1)(k + 1)yx^3 + 2p^2(2\zeta - 1)(k^2 + 1)x^4.
\end{align*}
}
Then the function
\[
f_\tau := \frac{1}{(-2\zeta + 1)k}\cdot\frac{f_2}{g_2}
\]
has zeroes at $\Gamma_6$ and $\null^\tau\Gamma_8$ and poles at $\Gamma_8$ and $\null^\tau\Gamma_6$. Because of the normalization, $f_\tau$ and $f_\sigma$ satisfy the cocycle condition. Thus $\OO_{X_K}(\Gamma_6 - \Gamma_8)$ descends to a line bundle $\sF$ over $L$, as we expected. It remains to find a divisor over $L$ in the class of $\sF$. To this end, we average the rational section $1$ of $\OO_{X_K}(\Gamma_6 - \Gamma_8)$ over the group $H$ to obtain a rational section 
\[
\s = \sum_{h \in H}{}^{h^{-1}}\!(f_h) = 1 + {}^{\sigma^2}\!f_\sigma + 
{}^\tau\!f_\tau + {}^{\sigma^2}\!f_\sigma\cdot {}^{\sigma}\!f_\sigma +
{}^{\sigma^2}\!f_\sigma\cdot{}^{\sigma}\!f_\sigma\cdot{}^{\sigma\tau}\!f_\tau +
{}^{\sigma^2}\!f_\sigma\cdot {}^{\tau\sigma}\!f_\tau
\]
of $\sF$. The common denominator of $\mathfrak{s}$ is
\[
{}^{\sigma^2}\!g_1\cdot {}^\tau\!g_2 \cdot {}^\sigma\!g_1 \cdot {}^{\sigma\tau}\!g_2 \cdot {}^{\tau\sigma}\!g_2 .
\]
By definition of $g_1$ and $g_2$ this denominator vanishes along the divisor
 \begin{align*}
 &2\Gamma_6 + 2\Gamma_6' + {}^{\sigma^2}\Gamma_8 + {}^{\sigma^2}\Gamma_8' + {}^{\tau}\Gamma_8 + {}^{\tau}\Gamma_8' + 2\big({}^{\sigma^2}\Gamma_6\big) + 2\big({}^{\sigma^2}\Gamma_6'\big) \\
&\quad + 
{}^{\sigma}\Gamma_8 +\null^{\sigma}\Gamma_8' + \null^{\sigma\tau}\Gamma_8 + \null^{\sigma\tau}\Gamma_8' + \null^{\sigma}\Gamma_6 + \null^{\sigma}\Gamma_6' + \null^{\tau\sigma}\Gamma_8 + \null^{\tau\sigma}\Gamma_8'.
 \end{align*}
 Here $2\Gamma_6$ means, for example, that the denominator vanishes on this curve to order $2$. The numerator of $\mathfrak{s}$ vanishes along the divisor
\[
 \Gamma_6 + 2\Gamma_6' + {}^{\sigma^2}\Gamma_8' + {}^\tau\Gamma_8' + 2\big({}^{\sigma^2}\Gamma_6\big) + 2\big({}^{\sigma^2}\Gamma_6'\big) + {}^\sigma\Gamma_8' + {}^{\sigma\tau}\Gamma_8' + {}^\sigma\Gamma_6 + {}^\sigma\Gamma_6' + {}^{\tau\sigma}\Gamma_8' + Z,
\]
 where $Z$ is some curve on $X$.  Thus, as a rational function, $\s$ has a zero of order $1$ along $Z$ and poles of order $1$ along the divisor
\[
 P := \Gamma_6 + {}^{\sigma^2}\Gamma_8 +  {}^\tau\Gamma_8 + {}^{\sigma\tau}\Gamma_8 + {}^\sigma\Gamma_8 + \null^{\tau\sigma}\Gamma_8
\]
As a \emph{rational section} of $\OO_{X_K}(\Gamma_6 - \Gamma_8)$, $\s$ has a zero along $Z$ and a pole along
\[
P' := \Gamma_8 + {}^{\sigma^2}\Gamma_8 +  {}^\tau\Gamma_8 + {}^{\sigma\tau}\Gamma_8 + {}^\sigma\Gamma_8 + \null^{\tau\sigma}\Gamma_8 = \sum_{h \in H} {}^h\Gamma_8.
\]
The divisor $Z - P' \in \Div X_L$ represents the class of $\Gamma_6 - \Gamma_8$. 

Let $\bar{\rho}$ be the class of $\rho$ in $\Gal(L/\Q)$. By Theorem~\ref{T:cyclicalgebras}, the class $[Z - P']$ gives a cyclic algebra $(\Q(\sqrt{p})/\Q,f)$ in $\Br X/\Br \Q$, where $f\in \Q(X)^*$ is any function such that 
\[
N_{\Q(\sqrt{p})/\Q}(Z - P') = Z + {}^{\bar{\rho}}\!Z - (P' + {}^{\bar{\rho}}\!P')= (f).
\]

We find an explicit $f$. The numerator of $\s$ (after cancelling out common divisors) is a polynomial of degree $12$ in $\Q(\sqrt[3]{2},\zeta,\sqrt{p})[x,y,z,w]$. We may express it as
\[
p_1 + \sqrt[3]{2}p_2 + \sqrt[3]{4}p_3 + \zeta p_4 + \zeta\sqrt[3]{2}p_5 + \zeta\sqrt[3]{4}p_6,
\]
where $p_i \in \Q(\sqrt{p})[x,y,z,w]$ for $i = 1,\dots, 6$. Then $Z = V(p_1,\dots,p_6)$.
We find constants $b_i \in \Q(\sqrt{p})$ such that the polynomial $q = \sum_i b_ip_i$ belongs to $\Q[x,y,z,w]$; then the polynomial $q$ vanishes on $Z \cup {}^{\bar{\rho}}\!Z$ and is a suitable numerator for $f$. A little linear algebra reveals that
{\scriptsize
\begin{align*}
q &= 12z^6 -72pz^5y^2 -192pz^5yx -48pz^5x^2 +
300p^2z^4y^4 +
    600p^2z^4y^3x + 576p^2z^4y^2x^2 + 408p^2z^4yx^3 \\
    &\quad + 156p^2z^4x^4 -288p^3z^3y^6 -720p^3z^3y^5x -888p^3z^3y^4x^2 
    -768p^3z^3y^3x^3 -756p^3z^3y^2x^4 -264p^3z^3yx^5 \\
    &\quad  -204p^3z^3x^6 + 144p^4z^2y^8 + 456p^4z^2y^7x 
    + 1032p^4z^2y^6x^2 + 1080p^4z^2y^5x^3 + 756p^4z^2y^4x^4 + 864p^4z^2y^3x^5 \\
    &\quad  + 684p^4z^2y^2x^6 + 456p^4z^2yx^7 -48p^4z^2x^8 + 192p^5zy^{10} -48p^5zy^9x 
    -720p^5zy^8x^2 -1104p^5zy^7x^3\\
    &\quad  -600p^5zy^6x^4 
    -216p^5zy^5x^5 -240p^5zy^4x^6 -480p^5zy^3x^7 
    -504p^5zy^2x^8 -24p^5zyx^9 + 48p^5zx^{10} \\
    &\quad -192p^6y^{12} -288p^6y^{11}x + 192p^6y^{10}x^2 + 528p^6y^9x^3 + 432p^6y^8x^4 +
    168p^6y^7x^5 -192p^6y^6x^6  -288p^6y^5x^7\\
    &\quad + 192p^6y^4x^8 +
    312p^6y^3x^9 -48p^6yx^{11}.
\end{align*}
}
Now we look for a polynomial $r$ of the same degree as $q$ vanishing on $P' + {}^{\bar{\rho}}\!P'$. Since $\rho$ acts as the Bertini involution $\Gamma \mapsto \Gamma'$ on exceptional curves, we have
\[
P' + {}^{\bar{\rho}}\!P' = \sum_{h \in H} {}^h(\Gamma_8 + \Gamma_8').
\]
The polynomial $z - Q_8(x,y)$ vanishes on $\Gamma_8 + \Gamma_8'$. Hence we may take $r = \prod_{h \in H}(z - {}^hQ_8(x,y)),$ and obtain
{\scriptsize
\begin{align*}
r &= z^6 - 6pz^5y^2 - 24pz^5yx - 6pz^5x^2 + 36p^2z^4y^4 + 78p^2z^4y^3x +
132p^2z^4y^2x^2 + 78p^2z^4yx^3 + 36p^2z^4x^4 \\
  & \quad  + 8p^3z^3y^6 - 60p^3z^3y^5x - 168p^3z^3y^4x^2 -
276p^3z^3y^3x^3
    - 168p^3z^3y^2x^4 - 60p^3z^3yx^5 + 8p^3z^3x^6  - 24p^4z^2y^8 \\
    &\quad - 24p^4z^2y^7x +
    156p^4z^2y^6x^2 + 396p^4z^2y^5x^3 + 540p^4z^2y^4x^4 + 396p^4z^2y^3x^5 + 156p^4z^2y^2x^6 - 24p^4z^2yx^7 \\
     &\quad - 24p^4z^2x^8 + 24p^5zy^9x + 24p^5zy^8x^2 - 120p^5zy^7x^3 -
    324p^5zy^6x^4 - 432p^5zy^5x^5 - 324p^5zy^4x^6 \\
    &\quad  - 120p^5zy^3x^7 + 24p^5zy^2x^8 +
    24p^5zyx^9 + 16p^6y^{12} + 48p^6y^{11}x + 48p^6y^{10}x^2 + 48p^6y^9x^3 + 120p^6y^8x^4\\
    &\quad   +  192p^6y^7x^5 + 212p^6y^6x^6 + 192p^6y^5x^7 + 120p^6y^4x^8 + 48p^6y^3x^9 +
    48p^6y^2x^{10} + 48p^6yx^{11} + 16p^6x^{12}.
\end{align*}
}
Let $f = q/r$ and let $\sA$ denote the Azumaya algebra on $X$ corresponding to $(L/\Q,f)$. There are two obvious rational points on the surface $X$ other than the anticanonical point, namely,
\[
P_{1} = [1:0:-p:0] \quad\text{and}\quad P_{2} = [0:1:-p:0].
\]
Specializing the algebra $\sA$ at $P_{1}$ we obtain the quaternion algebra $(p,12) \cong (p,3)$ over $\Q$. The invariant of this algebra at a prime $q$ is readily calculated using the Hilbert symbol $[\,\cdot\,  ,\cdot\, ]_q \in \{\pm 1\}$ of the quaternion algebra (cf.~\cite{serre}) via the formula
\[
\inv_q (a,b) = \frac{1 - [a,b]_q}{4} \in \Q/\Z.
\]
Using the formulas for the Hilbert symbol in~\cite{serre}, we find that
\[
[p,3]_q = 
\begin{cases}
(-1)^{(p-1)/2} & \text{if } q = 2, \\
p\overwithdelims () 3& \text{if } q = 3, \\
3\overwithdelims () p & \text{if } q = p, \\
1 & \text{otherwise,} \\
\end{cases}
\]
where $p\overwithdelims () q$ is the usual Legendre symbol. On the other hand, specializing $\sA$ at $P_{2}$ we obtain the quaternion algebra $(p,16) \cong (p,1)$ over $\Q$. We find that $[p,1]_q = 1$ for all primes $q$. Hence
\begin{equation}
\label{E:congruences}
\inv_3 (p,3) \neq \inv_3 (p,1) \text{ if } p \equiv 5\bmod 6
\quad \text{and}\quad \inv_2 (p,3) \neq \inv_2 (p,1) \text{ if } p \equiv 3\bmod 4.
\end{equation}
Let $P \in X(\Aff_\Q)$ be the point that is equal to $P_{1}$ at all places except $p$, and is $P_{2}$ at $p$. Then by~(\ref{E:congruences}) it follows that if $p\equiv 5 \bmod 6$ then
\[
\sum_{v} \inv_v(\sA(P_v)) = 1/2.
\]
Similarly, if $P' \in X(\Aff_\Q)$ is the point that is equal to $P_{1}$ at all places except $2$, and is $P_{2}$ at $2$, then by~(\ref{E:congruences}) we find that the sum of invariants is again $1/2$ when  $p\equiv 3 \bmod 4$.

In either case, we have shown that if $p\not\equiv 1 \bmod 12$ then $X(\Aff_\Q) \neq X(\Aff_\Q)^{\Br}$, and hence $X$ does not satisfy weak approximation.

Finally, we note that $\Pic X = (\Pic X_L)^{\Gal(L/\Q)} = \ker\Delta = \Z$, generated by the anticanonical class. In fact, $\Pic X_L \cong \Z^3$, generated by the classes~(\ref{E:generatorsForPic}) and the anticanonical class, and $\rho$ acts nontrivially on the classes~(\ref{E:generatorsForPic}).  Hence $X$ is minimal.
\end{proof}

% The Appendices part is started with the command \appendix;
% appendix sections are then done as normal sections
\appendix

%****************************************************************************
\section{Galois Descent of Line Bundles}
\label{S:GaloisDescent}

To make the isomorphism~(\ref{E:lower}) explicit we need the theory of Galois descent of line bundles, which is a special case of the theory of descent of quasi-coherent sheaves over faithfully flat and quasi-compact morphisms. Good references for Galois descent are \cite{BLR} and \cite{KT3}. For the general theory of descent see \cite{SGA1}. 

Let $K/k$ be a finite Galois extension of number fields. For every element $\sigma \in \Gal(K/k)$ let $\tilde{\sigma} : \Spec K \to \Spec K$ denote the corresponding morphism. Let $X$ be a $k$-scheme, and suppose we are given a line bundle $\widetilde{\sF}$ on the $K$-scheme $X_K$, together with a collection of isomorphisms $f_{\sigma}: \widetilde{\sF} \to \tilde{\sigma}^*\widetilde{\sF}$ such that
\begin{equation}
\label{E:cocyclecondition}
f_{\sigma\tau} = \null^\sigma\!f_\tau\circ f_\sigma \qquad\text{for all }\sigma,\tau \in \Gal(K/k),
\end{equation}
where $\null^\sigma\!f_\tau := \tilde{\sigma}^*f_\tau$. Then there exists a sheaf $\sF$ on $X$, and an isomorphism $\lambda\colon\sF_K \to \widetilde{\sF}$ such that $f_\sigma = \null^\sigma\!\lambda\circ\lambda^{-1}$ for all $\sigma$. Together, the equalities~(\ref{E:cocyclecondition}) are referred to as the \defi{cocycle condition}. 

If $X$ is a geometrically integral $k$-scheme, then $\widetilde{\sF} = \OO_{X_K}(D)$ for some divisor $D\in \Div X_K$, and $f_\sigma$ can be regarded as a function (up to multiplication by a scalar) whose associated divisor is $D - \null^\sigma\! D$. If $X(K) \neq \emptyset$ then one may use a point in $P \in X(K)$ to normalize the functions so that $f_\sigma$ acts as the identity in the fiber of $\widetilde{\sF}$ at $P$. We usually don't know if $X(K)$ is empty or not, but in the case of del Pezzo surfaces of degree $1$ over $k$ we have the anticanonical point.

To obtain a divisor for the descended line bundle, we take a rational section $\xi$ of $\widetilde{\sF}$ and we ``average it'' over the Galois group $G$ to obtain a rational section of $\sF$
\[
\s := \sum_{\sigma\in G} \null^{\sigma^{-1}}\!(f_\sigma(\xi)).
\]
Note it may be necessary to change the choice of $\xi$ to make $\s$ nonzero.
The divisor of zeroes of $\s$, with respect to local trivializations for $\sF$, gives a line bundle isomorphic to the descended line bundle. We often use the rational section $\xi = 1$, and since $f_\sigma$ acts by multiplication, we obtain
$ \s = \sum_{\sigma\in G} \null^{\sigma^{-1}}\!(f_\sigma)$
in this case.

%******************************************
%--------------Bibliography---------------
\begin{bibdiv}
\begin{biblist}

\bib{BirchSwinnertonDyer}{article}{
    AUTHOR = {Birch, B. J.},
    author = {Swinnerton-Dyer, H. P. F.},
     TITLE = {The {H}asse problem for rational surfaces},
      %NOTE = {Collection of articles dedicated to Helmut Hasse on his seventy-fifth birthday, III},
   JOURNAL = {J. Reine Angew. Math.},
  FJOURNAL = {Journal f\"ur die Reine und Angewandte Mathematik},
    VOLUME = {274/275},
      YEAR = {1975},
     PAGES = {164--174},
      ISSN = {0075-4102},
   MRCLASS = {14J20 (14G25 10C20)},
  %MRNUMBER = {MR0429913 (55 \#2922)},
%MRREVIEWER = {D. J. Lewis},
}

\bib{BLR}{book}{
   AUTHOR = {Bosch, S.}
   author={L{\"u}tkebohmert, W.}
   author={Raynaud, M.},
     TITLE = {N\'eron models},
    SERIES = {Ergebnisse der Mathematik und ihrer Grenzgebiete (3)},
    VOLUME = {21},
 PUBLISHER = {Springer},
   ADDRESS = {Berlin},
      YEAR = {1990},
     PAGES = {x+325},
      ISBN = {3-540-50587-3},
   MRCLASS = {14K15 (11G10 14L15)},
%  MRNUMBER = {MR1045822 (91i:14034)},
%MRREVIEWER = {James Milne},
}

\bib{magma}{article}{
    AUTHOR = {Bosma, W.}, 
    author = {Cannon, J.},
    author = {Playoust, C.},
     TITLE = {The {M}agma algebra system. {I}. {T}he user language},
      %NOTE = {Computational algebra and number theory (London, 1993)},
   JOURNAL = {J. Symbolic Comput.},
  FJOURNAL = {Journal of Symbolic Computation},
    VOLUME = {24},
      YEAR = {1997},
    NUMBER = {3-4},
     PAGES = {235--265},
      ISSN = {0747-7171},
 %  MRCLASS = {68Q40},
  %MRNUMBER = {MR1484478},
}

\bib{Bright}{thesis}{
author={Bright, M.},
title={Computations on diagonal quartic surfaces},
note={Ph. D. Thesis}
school={Cambridge University}
year={2002}, 
}

\bib{CasselsGuy}{article}{
    AUTHOR = {Cassels, J. W. S.},
    author = {Guy, M. J. T.},
     TITLE = {On the {H}asse principle for cubic surfaces},
   JOURNAL = {Mathematika},
  FJOURNAL = {Mathematika. A Journal of Pure and Applied Mathematics},
    VOLUME = {13},
      YEAR = {1966},
     PAGES = {111--120},
      ISSN = {0025-5793},
   MRCLASS = {10.12 (14.40)},
%  MRNUMBER = {MR0211966 (35 \#2841)},
%MRREVIEWER = {D. J. Lewis},
}

\bib{CTKS87}{incollection}{
    AUTHOR = {Colliot-Th{\'e}l{\`e}ne, J.-L.},
    author = {Kanevsky, D.},
    author = {Sansuc, J.-J.},
     TITLE = {Arithm\'etique des surfaces cubiques diagonales},
 BOOKTITLE = {Diophantine approximation and transcendence theory},
    SERIES = {Lecture Notes in Math.},
    VOLUME = {1290},
     PAGES = {1--108},
 PUBLISHER = {Springer},
   ADDRESS = {Berlin},
      YEAR = {1987},
   MRCLASS = {11G35 (11D25 14J20)},
%  MRNUMBER = {MR927558 (89g:11051)},
%MRREVIEWER = {Andrew Bremner},
}

\bib{Requivalence}{article}{
   author={Colliot-Th{\'e}l{\`e}ne, Jean-Louis},
   author={Sansuc, Jean-Jacques},
   title={La $R$-\'equivalence sur les tores},
   %language={French},
   journal={Ann. Sci. \'Ecole Norm. Sup. (4)},
   volume={10},
   date={1977},
   number={2},
   pages={175--229},
   issn={0012-9593},
   %review={\MR{0450280 (56 \#8576)}},
}
\bib{CTSSD}{article}{
    AUTHOR = {Colliot-Th{\'e}l{\`e}ne, J.-L.},
    author = {Sansuc, J.-J.},
    author = {Swinnerton-Dyer, P.}
     TITLE = {Intersections of two quadrics and Ch\^atelet surfaces},
   JOURNAL = {J. Reine Angew. Math.},
    VOLUME = {374},
      YEAR = {1987},
     PAGES = {72--168},
}
   
\bib{Corn}{thesis}{
author={Corn, P.},
title={Del Pezzo surfaces and the Brauer-Manin obstruction},
note={Ph. D. Thesis}
school={University of California, Berkeley}
year={2005}, 
}

% Added by hand
\bib{Corn2}{article}{
author={Corn, P.},
title={The Brauer-Manin obstruction on del Pezzo surfaces of degree 2},
   JOURNAL = {Proc. London Math. Soc. (3)},
  FJOURNAL = {Proceedings of the London Mathematical Society. Third Series},
    VOLUME = {95},
      YEAR = {2007},
    NUMBER = {3},
     PAGES = {735--777},
      ISSN = {0024-6115},
}

\bib{Cragnolini}{article}{
AUTHOR = {Cragnolini, P.}
author={Oliverio, P. A.},
     TITLE = {Lines on del {P}ezzo surfaces with {$K\sp 2\sb S=1$} in
              characteristic {$\neq 2$}},
   JOURNAL = {Comm. Algebra},
  FJOURNAL = {Communications in Algebra},
    VOLUME = {27},
      YEAR = {1999},
    NUMBER = {3},
     PAGES = {1197--1206},
      ISSN = {0092-7872},
     CODEN = {COALDM},
   MRCLASS = {14J26},
  %MRNUMBER = {MR1669140 (99m:14066)},
%MRREVIEWER = {M. Kh. Gizatullin},
}

% added by hand
\bib{Demazure1980}{article}{
   author={Demazure, M.}
   title={Surfaces de Del Pezzo II, III, IV, V}
   book={
      title={S\'eminaire sur les Singularit\'es des Surfaces}
     series={Lecture Notes in Mathematics}
     volume={777}
     publisher={Springer}
     place={Berlin} }
   pages={23\ndash 69}
   date={1980}
}

\bib{Fulton}{book}{
   author={Fulton, W.},
   title={Intersection theory},
   series={Ergebnisse der Mathematik und ihrer Grenzgebiete.},
   volume={2},
   edition={2},
   publisher={Springer-Verlag},
   place={Berlin},
   date={1998},
   pages={xiv+470},
%   isbn={3-540-62046-X},
%   isbn={0-387-98549-2},
%   review={\MR{1644323 (99d:14003)}},
}

\bib{SGA1}{book}{
author={Grothendieck, A.}
     TITLE = {Rev\^etements \'etales et groupe fondamental ({SGA} 1)},
    SERIES = {Documents Math\'ematiques, 3},
      %NOTE = {Updated and annotated reprint of the 1971 original [Lecture
              %Notes in Math., 224, Springer, Berlin]},
 PUBLISHER = {Soci\'et\'e Math\'ematique de France},
   ADDRESS = {Paris},
      YEAR = {2003},
     PAGES = {xviii+327},
      ISBN = {2-85629-141-4},
   MRCLASS = {14E20 (14-06 14F35)},
%  MRNUMBER = {MR2017446 (2004g:14017)},
}

\bib{Harari}{incollection}{
    AUTHOR = {Harari, D.},
     TITLE = {Weak approximation on algebraic varieties},
 BOOKTITLE = {Arithmetic of higher-dimensional algebraic varieties (Palo
              Alto, CA, 2002)},
    SERIES = {Progr. Math.},
    VOLUME = {226},
     PAGES = {43--60},
 PUBLISHER = {Birkh\"auser Boston},
   ADDRESS = {Boston, MA},
      YEAR = {2004},
   MRCLASS = {11G35 (14G25)},
  %MRNUMBER = {MR2029861 (2004k:11101)},
%MRREVIEWER = {Philippe Gille},
}

\bib{Kollar1996}{book}{
    AUTHOR = {Koll{\'a}r, J.},
     TITLE = {Rational curves on algebraic varieties},
    SERIES = {Ergebnisse der Mathematik und ihrer Grenzgebiete (3)}
     %Folge. A Series of Modern Surveys in Mathematics},
    VOLUME = {32},
 PUBLISHER = {Springer},
   ADDRESS = {Berlin},
      YEAR = {1996},
     PAGES = {viii+320},
      ISBN = {3-540-60168-6},
   MRCLASS = {14-02 (14C05 14E05 14F17 14J45)},
  %MRNUMBER = {MR1440180 (98c:14001)},
%MRREVIEWER = {Yuri G. Prokhorov},
}

\bib{KreschTschinkel}{article}{
    AUTHOR = {Kresch, A.}
    author={Tschinkel, Y.},
     TITLE = {On the arithmetic of del {P}ezzo surfaces of degree 2},
   JOURNAL = {Proc. London Math. Soc. (3)},
  FJOURNAL = {Proceedings of the London Mathematical Society. Third Series},
    VOLUME = {89},
      YEAR = {2004},
    NUMBER = {3},
     PAGES = {545--569},
      ISSN = {0024-6115},
     CODEN = {PLMTAL},
   MRCLASS = {14G25 (14F22)},
%  MRNUMBER = {MR2107007 (2005h:14060)},
%MRREVIEWER = {Timothy D. Browning},
}

\bib{KT3}{article}{
    AUTHOR = {Kresch, A.}
    author={Tschinkel, Y.},
title={Effectivity of Brauer-Manin obstructions},
date={2006-12-21}
note={Preprint math/0612665}, 
}
\bib{KreschTschinkel2}{article}{
    AUTHOR = {Kresch, A.}
    author={Tschinkel, Y.},
title={Brauer-Manin obstructions to integral points},
date={2007-9-7},
note={Preprint math/0709.1154}, 
}

\bib{Manin}{book}{
    AUTHOR = {Manin, Yu. I.},
     TITLE = {Cubic forms: algebra, geometry, arithmetic},
      %NOTE = {Translated from Russian by M. Hazewinkel,
         %     North-Holland Mathematical Library, Vol. 4},
 PUBLISHER = {North-Holland Publishing Co.},
   ADDRESS = {Amsterdam},
      YEAR = {1974},
     PAGES = {vii+292},
      ISBN = {0-7204-2456-9},
   MRCLASS = {14M20 (14G25 10B15)},
  %MRNUMBER = {MR0460349 (57 \#343)},
}

\bib{Milne}{book}{
   AUTHOR = {Milne, J. S.},
     TITLE = {\'{E}tale cohomology},
    SERIES = {Princeton Mathematical Series},
    VOLUME = {33},
 PUBLISHER = {Princeton University Press},
   ADDRESS = {Princeton, N.J.},
      YEAR = {1980},
     PAGES = {xiii+323},
      ISBN = {0-691-08238-3},
   MRCLASS = {14-02 (14F20 18F99)},
  %MRNUMBER = {MR559531 (81j:14002)},
%MRREVIEWER = {G. Horrocks},
}
    
\bib{serre}{book}{
    AUTHOR = {Serre, J.-P.},
     TITLE = {A course in arithmetic},
      %NOTE = {Translated from the French}
    SERIES = {Graduate Texts in Mathematics},
    VOLUME = {7},
 PUBLISHER = {Springer},
   ADDRESS = {New York},
      YEAR = {1973},
     PAGES = {viii+115},
   MRCLASS = {12-02 (10CXX 10DXX)},
%  MRNUMBER = {MR0344216 (49 \#8956)},
}

\bib{Shioda2}{article}{
    AUTHOR = {Shioda, T.},
     TITLE = {On the {M}ordell-{W}eil lattices},
   JOURNAL = {Comment. Math. Univ. St. Paul.},
  FJOURNAL = {Commentarii Mathematici Universitatis Sancti Pauli},
    VOLUME = {39},
      YEAR = {1990},
    NUMBER = {2},
     PAGES = {211--240},
      ISSN = {0010-258X},
     CODEN = {COMAAC},
   MRCLASS = {14J27 (11G05 14C17 14D10 14J05 14J20)},
  %MRNUMBER = {MR1081832 (91m:14056)},
%MRREVIEWER = {Joseph H. Silverman},
}

\bib{Skorobogatov}{book}{
    AUTHOR = {Skorobogatov, A.},
     TITLE = {Torsors and rational points},
    SERIES = {Cambridge Tracts in Mathematics},
    VOLUME = {144},
 PUBLISHER = {Cambridge University Press},
   ADDRESS = {Cambridge},
      YEAR = {2001},
     PAGES = {viii+187},
      ISBN = {0-521-80237-7},
   MRCLASS = {14G05 (11G35 11S25 14D10 14G25 14L30)},
%  MRNUMBER = {MR1845760 (2002d:14032)},
%MRREVIEWER = {Tam{\'a}s Szamuely},
}

\bib{SD}{article}{
    author = {Swinnerton-Dyer, P.}
     TITLE = {Two special cubic surfaces},
   JOURNAL = {Mathematika},
    VOLUME = {9},
      YEAR = {1962},
     PAGES = {54--56},
}

\end{biblist}
\end{bibdiv}

\end{document}